\newcommand*\rel@kern[1]{\kern#1\dimexpr\macc@kerna}
\newcommand*\widebar[1]{%
  \begingroup
  \def\mathaccent##1##2{%
    \rel@kern{0.8}%
    \overline{\rel@kern{-0.8}\macc@nucleus\rel@kern{0.2}}%
    \rel@kern{-0.2}%
  }%
  \macc@depth\@ne
  \let\math@bgroup\@empty \let\math@egroup\macc@set@skewchar
  \mathsurround\z@ \frozen@everymath{\mathgroup\macc@group\relax}%
  \macc@set@skewchar\relax
  \let\mathaccentV\macc@nested@a
  \macc@nested@a\relax111{#1}%
  \endgroup
}
\definecolor{UCRceleste}{RGB}{0,192,243}
\definecolor{Crimson}{RGB}{220, 20, 60}
\definecolor{Turquoise}{HTML}{04d4f0}
\definecolor{GunmetalGray}{HTML}{1ca6a0}
\definecolor{HotPink}{HTML}{f652a0}
\definecolor{BlueGrotto}{HTML}{059dc0}
\definecolor{GaloisBlue}{HTML}{6495ED}
\definecolor{GaloisRed}{HTML}{DE3163}
\definecolor{mintgreen}{RGB}{152,255,152}
\definecolor{pinksalmon}{RGB}{255,102,102}
\definecolor{hueso}{RGB}{245,245,220}
\definecolor{marfil}{RGB}{255,253,208}
\definecolor{amarillo}{RGB}{255,255,0}
\numberwithin{equation}{section}
\newtheorem{theorem}{Theorem}[section]
\newtheorem{lemma}[theorem]{Lemma}
\newtheorem{proposition}[theorem]{Proposition}
\newtheorem{corollary}[theorem]{Corollary}
\newtheorem{question}[theorem]{Question}
\def\moverlay{\mathpalette\mov@rlay}
\def\mov@rlay#1#2{\leavevmode\vtop{%
   \baselineskip\z@skip \lineskiplimit-\maxdimen
   \ialign{\hfil$\m@th#1##$\hfil\cr#2\crcr}}}
\newcommand{\charfusion}[3][\mathord]{
    #1{\ifx#1\mathop\vphantom{#2}\fi
        \mathpalette\mov@rlay{#2\cr#3}
      }
    \ifx#1\mathop\expandafter\displaylimits\fi}
\newcommand{\suchthat}{\;\ifnum\currentgrouptype=16 \middle\fi|\;}
\newcommand{\Z}{\mathbb{Z}}
\newcommand{\C}{\mathbb{C}}
\newcommand{\Q}{\mathbb{Q}}
\newcommand{\R}{\mathbb{R}}
\newcommand{\op}[1]{\operatorname{#1}}
\theoremstyle{definition}
\newtheorem{remark}[theorem]{Remark}
\def\C{{\mathbb C}}
\def\R{{\mathbb R}}
\def\Z{{\mathbb Z}}
\def\Q{{\mathbb Q}}
\def\O_K{{\Cal{O}_{K}}}
\def\O_F{{\Cal{O}_{F}}}
\def\N_F{{\Cal{N}_{F/\Q}}}
\definecolor{sagebrown}{RGB}{176, 92, 10}
\definecolor{sageblue}{RGB}{44, 45, 254}
\definecolor{sagepurple}{RGB}{151, 57, 164}
\definecolor{sagegreen}{RGB}{18, 103, 68}
\definecolor{sagered}{RGB}{170, 16, 15}
\lstdefinestyle{SageMath-style}{
    backgroundcolor=\color{white},   
    commentstyle=\color{sagebrown},
    keywordstyle=\color{sagepurple},
    keywordstyle = [2]{\color{sageblue}},
    keywordstyle = [3]{\color{yellow}},
    numberstyle=\tiny\color{sagegreen},
    stringstyle=\color{sagered},
    basicstyle=\ttfamily\footnotesize,
    breakatwhitespace=false,         
    breaklines=true,                 
    captionpos=b,                    
    keepspaces=true,                 
    numbers=left,                    
    numbersep=5pt,                  
    showspaces=false,                
    showstringspaces=false,
    showtabs=false,                  
    tabsize=2
}
\begin{document}

\title{The density and distribution of CM elliptic curves over $\Q$}

\author[]{Adrian Barquero-Sanchez\orcidlink{0000-0001-7847-2938} and Jimmy Calvo-Monge\orcidlink{0000-0002-4823-2455}}

\address{Escuela de Matem\'atica, Universidad de Costa Rica, San Jos\'e 11501, Costa Rica}
\address{Centro de Investigación en Matemática Pura y Aplicada, Universidad de Costa Rica, San Jos\'e 2060, Costa Rica}
\email{adrian.barquero\_s@ucr.ac.cr}
\email{jimmy.calvo@ucr.ac.cr}

\subjclass{11G05, 11G15, 11N45}
\keywords{Elliptic curve, density, complex multiplication, cm j-invariant, naive height}

\begin{abstract}
In this paper we study the density and distribution of CM elliptic curves over $\Q$. In particular, we prove that the natural density of CM elliptic curves over $\Q$, when ordered by naive height, is zero. Furthermore, we analyze the distribution of these curves among the thirteen possible CM orders of class number one. Our results show that asymptotically, $100\%$ of them have complex multiplication by the order $\Z\left[\frac{-1 + \sqrt{-3}}{2} \right]$, that is, have $j$-invariant 0. We conduct this analysis within two different families of representatives for the $\Q$-isomorphism classes of CM elliptic curves: one commonly used in the literature and another constructed using the theory of twists. As part of our proofs, we give asymptotic formulas for the number of elliptic curves with a given $j$-invariant and bounded naive height.
\end{abstract}

\maketitle

\section{Introduction} 

It has long been recognized by experts in the theory of elliptic curves, that the elliptic curves that have complex multiplication are exceedingly rare among all elliptic curves. Nevertheless, these curves possess many properties that make them special and gives them extra structure, which is something that has been exploited many times, for example making the family of CM elliptic curves the first for which more general theorems have been proved; for instance some of the first cases of the Birch and Swinnerton-Dyer conjecture, proved by Coates and Wiles in \cite{CW77} and the proof of modularity of CM elliptic curves by Shimura in \cite{Shi71}. This is why over the past century, through the efforts of many mathematicians, the theory of elliptic curves with complex multiplication has flourished into a rich and very satisfying field, whose elegance was underscored by David Hilbert's famous quote, as recalled by Olga Taussky in Hilbert's obituary in the journal Nature \cite{Tau43}:

\begin{quotation}
It is interesting to recall that, in connection with a lecture by Prof. R. Fueter at the 1932 Zurich Congress, Hilbert asserted that the theory of complex multiplication (of elliptic modular functions) which forms a powerful link between number theory and analysis, is not only the most beautiful part of mathematics but also of all science.
\end{quotation}

In this paper, we aim to investigate the density and distribution of CM elliptic curves over $\mathbb{Q}$ in greater detail from an analytic number theory perspective. To make our study precise, we consider the family $\mathcal{E}$ consisting of all elliptic curves $E_{A, B}$ given by the equation $y^2 = x^3 + Ax + B$, where $A, B \in \mathbb{Z}$ satisfy the condition $\Delta_{E_{A, B}} = -16(4A^3 + 27 B^2) \neq 0$, and there exists no prime number $p$ such that $p^4 | A$ and $p^6 |B$. The last condition on the prime divisors of $A$ and $B$ ensures that each elliptic curve $E/\Q$ is isomorphic over $\Q$ to a unique elliptic curve in the family $\mathcal{E}$. In other words, the family $\mathcal{E}$ serves as a complete set of representatives for the $\Q$-isomorphism classes of elliptic curves over $\Q$ (see e.g.\cite[Proposition 2.19]{Was08}, \cite[p. 45]{Sil09} and \cite[\S 1.3]{Poo13}).

In particular, we want to study how large certain subsets of $\mathcal{E}$ are. To make this precise, we use the notion of natural density, which we define below. First, since the sets that we want to study are infinite, we need a way to bound their size so that we can count finite subsets. We do this by employing the \textit{naive height} 
\begin{align*}
h^{\textrm{naive}}(E_{A, B}) := \max{\{ 4|A|^3, 27 |B|^2 \}}.
\end{align*}
Then, for any $X > 0$ we let $\mathcal{E}(X) := \{ E_{A, B} \in \mathcal{E} \suchthat h^{\textrm{naive}}(E_{A, B}) \leq X \}$.
Now, given any subset $\mathcal{S} \subseteq \mathcal{E}$, we define its \textit{natural density} by
\begin{align*}
d(\mathcal{S}) := \lim_{X \to \infty} \frac{\#(\mathcal{E}(X) \cap \mathcal{S})}{\# \mathcal{E}(X)},
\end{align*}
if the limit exists.


Now, let 
\begin{align*}
\mathcal{E}^{\mathrm{cm}} := \{ E_{A, B} \in \mathcal{E} \suchthat \text{$E_{A, B}$ has complex multiplication} \}
\end{align*}
and define $\mathcal{E}^{\textrm{cm}}(X) := \mathcal{E}(X) \cap \mathcal{E}^{\textrm{cm}}$.

\subsection{The density of CM elliptic curves}

As mentioned above, it is widely recognized by experts that the elliptic curves with complex multiplication are very thinly distributed. To illustrate this, we used \texttt{SageMath} \cite{SageMath} to compute the values in Table \ref{table:natural-density-cm-elliptic-curves}.

\begin{table}[H]
{\tabulinesep=1.2mm
\begin{tabu}{c c c c} \hline
$X$ & $\# \mathcal{E}(X)$ & $\# \mathcal{E}^{\textrm{cm}}(X)$ & $\dfrac{\# \mathcal{E}^{\mathrm{cm}}(X)}{\# \mathcal{E}(X)}$ \\ \hline
$10$ & $2$ & $2$ & $1.00000$ \\ 
$100$ & $14$ & $6$ & $0.428571$ \\ 
$1000$ & $166$ & $24$ & $0.144578$ \\ 
$10000$ & $1048$ & $66$ & $0.0629771$ \\ 
$100000$ & $7130$ & $180$ & $0.0252454$ \\ 
$1000000$ & $48070$ & $508$ & $0.0105679$ \\ 
$10000000$ & $329472$ & $1470$ & $0.0044617$ \\ \hline
\end{tabu}}
\caption{The natural density of CM elliptic curves in the family $\mathcal{E}$.}
\label{table:natural-density-cm-elliptic-curves}
\end{table}

These values suggest that the natural density of the set $\mathcal{E}^{\mathrm{cm}}$ inside $\mathcal{E}$ is actually zero. Our first result confirms this expectation.

\begin{theorem}\label{thm:density-of-cm-elliptic-curves}
The natural density of the set of CM elliptic curves $\mathcal{E}^{\mathrm{cm}}$ in the family $\mathcal{E}$ is
\begin{align}
d(\mathcal{E}^{\mathrm{cm}}) = \lim_{X \to \infty} \frac{\#\mathcal{E}^{\mathrm{cm}}(X)}{\# \mathcal{E}(X)} = 0.
\end{align}
\end{theorem}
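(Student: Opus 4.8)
The plan is to estimate the numerator and denominator separately. For the denominator, a standard lattice-point count gives $\#\mathcal{E}(X) \asymp X^{5/6}$; more precisely, counting pairs $(A,B)\in\Z^2$ with $4|A|^3\le X$ and $27|B|^2\le X$ and then removing the non-minimal pairs (those divisible by $(p^4,p^6)$ for some prime $p$) via an inclusion–exclusion / Möbius argument over primes yields $\#\mathcal{E}(X) = c\,X^{5/6} + O(X^{1/2})$ for an explicit constant $c>0$ coming from a convergent Euler product $\prod_p (1-p^{-5})$. I would only need the lower bound $\#\mathcal{E}(X)\gg X^{5/6}$ for this theorem, but the asymptotic is presumably established elsewhere in the paper.

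For the numerator, the key structural fact is that an elliptic curve over $\Q$ has complex multiplication if and only if its $j$-invariant is one of the thirteen rational CM $j$-invariants $j_1,\dots,j_{13}$ (the $j$-invariants of the orders of class number one). Hence $\mathcal{E}^{\mathrm{cm}}(X) = \bigsqcup_{i=1}^{13}\{E_{A,B}\in\mathcal{E}(X): j(E_{A,B})=j_i\}$, a union of only finitely many fibers. So it suffices to bound, for each fixed rational number $j_0$, the number $N_{j_0}(X)$ of curves $E_{A,B}\in\mathcal{E}(X)$ with $j(E_{A,B})=j_0$, and show $N_{j_0}(X)=o(X^{5/6})$. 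For $j_0\ne 0,1728$ the curve $E_{A,B}$ with $j=j_0$ has $AB\ne0$, and any two such curves are quadratic twists of one another; writing one fixed model as $E_{A_0,B_0}$, the twist by a squarefree $d$ is (a scaled minimal model of) $y^2=x^3+A_0d^2x+B_0d^3$, whose naive height is $\asymp |d|^6$, so the number of such twists up to height $X$ is $\asymp X^{1/6}$. The cases $j_0=0$ (curves $y^2=x^3+B$, sextic twists) and $j_0=1728$ (curves $y^2=x^3+Ax$, quartic twists) are handled the same way and also give $O(X^{1/6})$, actually this is the dominant fiber but still negligible. Summing the thirteen contributions gives $\#\mathcal{E}^{\mathrm{cm}}(X) = O(X^{1/6})$.

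Combining, $d(\mathcal{E}^{\mathrm{cm}}) = \lim_{X\to\infty} O(X^{1/6})/(cX^{5/6}+O(X^{1/2})) = 0$. The main obstacle is making the per-fiber count rigorous: one must pin down exactly which $(A,B)$ in the fiber $j=j_0$ actually lie in $\mathcal{E}$ (i.e. are the global minimal Weierstrass representative in the chosen family), track the precise height of each twist including the constant, and handle the bookkeeping at $j_0=0$ and $j_0=1728$ where the twisting groups are larger and the stabilizers are nontrivial. This is precisely the content of the "asymptotic formulas for the number of elliptic curves with a given $j$-invariant and bounded naive height" advertised in the abstract, and once those are in hand the density statement follows immediately.
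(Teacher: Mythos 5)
Your overall strategy is the same as the paper's: decompose $\mathcal{E}^{\mathrm{cm}}$ into the thirteen fibers $\mathcal{E}_j$ over the rational CM $j$-invariants, bound each fiber, and compare with $\#\mathcal{E}(X)\asymp X^{5/6}$ (which the paper establishes as Brumer's asymptotic, Theorem \ref{thm:brumer}). However, your per-fiber counts for $j=0$ and $j=1728$ are wrong, and consequently so is your claimed total $\#\mathcal{E}^{\mathrm{cm}}(X)=O(X^{1/6})$. The height scaling $h^{\textrm{naive}}\asymp |d|^{6}$ is correct only for quadratic twists, i.e.\ for $j\neq 0,1728$, where the twist is $y^2=x^3+A_0d^2x+B_0d^3$. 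For $j=0$ the curves in the fiber are $y^2=x^3+B$ with $B$ sixth-power-free, and the height condition $27B^2\le X$ allows $|B|\le X^{1/2}/3^{3/2}$, so this fiber has $\asymp X^{1/2}$ curves; for $j=1728$ the curves are $y^2=x^3+Ax$ with $A$ fourth-power-free and $4|A|^3\le X$, giving $\asymp X^{1/3}$ curves. In twist language, the sextic and quartic twist parameters scale the height as $|D|^{2}$ and $|D|^{3}$ respectively (this is the exponent $m(j)$ in Lemma \ref{lem:naive_height_twist}), not $|D|^{6}$. Thus the correct statement, which is the paper's Theorem \ref{thm:asymptotic-count-cm-elliptic-curves}, is $\#\mathcal{E}^{\mathrm{cm}}(X)=\frac{2}{3^{3/2}\zeta(6)}X^{1/2}+\frac{2^{1/3}}{\zeta(4)}X^{1/3}+O(X^{1/6})$; your bound $O(X^{1/6})$ contradicts this main term (and your own parenthetical remark that the $j=0$ fiber is dominant).

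The damage to the theorem itself is limited: since $X^{1/2}=o(X^{5/6})$, the corrected numerator bound still forces $d(\mathcal{E}^{\mathrm{cm}})=0$, so the density conclusion survives once the $j=0$ and $j=1728$ fibers are counted correctly (by counting sixth-power-free and fourth-power-free integers, as the paper does via Proposition \ref{thm:density-k-free-integers}). Two smaller points: the Euler product governing the minimality sieve for $\#\mathcal{E}(X)$ is $\prod_p(1-p^{-10})=1/\zeta(10)$, coming from the substitution $(A,B)\mapsto(d^4A,d^6B)$ which scales the lattice count by $d^{-4}\cdot d^{-6}$, not $\prod_p(1-p^{-5})$; and for the fibers with $j\neq 0,1728$ one still needs an argument (the paper uses integral points on the cuspidal cubic $y^2=\frac{1728-4j}{27j}x^3$, Lemma \ref{lemma:technical_lemma}, or alternatively a careful twist parametrization) to get the $O(X^{1/6})$ bound rigorously, since membership in $\mathcal{E}$ is a condition on $(A,B)$ rather than on a squarefree twist parameter.
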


This result is obtained as a direct consequence of an asymptotic formula for the number of curves in $\mathcal{E}(X)$ due to Brumer \cite[Lemma 4.3]{Bru92} and an asymptotic formula for the number of CM elliptic curves in $\mathcal{E}^{\mathrm{cm}}(X)$. More precisely, Brumer used a slightly different normalization for the naive height than the one we use, so under our normalization of the height his argument gives the following asymptotic formula.

\begin{theorem}[Brumer]\label{thm:brumer}
The number of elliptic curves in $\mathcal{E}(X)$ satisfies the asymptotic formula
\begin{align}\label{eqn:brumer-elliptic-count}
\# \mathcal{E}(X) = \frac{2^{4/3}}{3^{3/2} \zeta(10)} X^{5/6} + O(X^{7/12}),
\end{align}
as $X \to \infty$, where $\zeta(s)$ denotes the Riemann zeta function.
\end{theorem}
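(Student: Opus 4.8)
The plan is to count lattice points $(A,B) \in \Z^2$ with $4|A|^3 \le X$ and $27|B|^2 \le X$ and $\Delta_{E_{A,B}} \ne 0$, subject to the minimality condition that no prime $p$ has $p^4 \mid A$ and $p^6 \mid B$. I would begin by ignoring the minimality condition: the number of pairs $(A,B) \in \Z^2$ with $|A| \le (X/4)^{1/3}$ and $|B| \le (X/27)^{1/2}$ is, up to lower-order boundary terms, $\bigl(2(X/4)^{1/3}\bigr)\bigl(2(X/27)^{1/2}\bigr) = \frac{4}{4^{1/3} \cdot 27^{1/2}} X^{1/3 + 1/2} = \frac{2^{4/3}}{3^{3/2}} X^{5/6}$, with an error of size $O(X^{1/2})$ coming from the edges of the box (the longer side has length $\asymp X^{1/3}$ and the shorter $\asymp X^{1/2}$, so the perimeter contributes $O(X^{1/2})$, which is absorbed into the claimed $O(X^{7/12})$).

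Next I would impose the minimality (global-minimality-type) condition via a standard inclusion–exclusion / Möbius sieve over primes $p$. For a fixed prime $p$, the number of pairs in the box with $p^4 \mid A$ and $p^6 \mid B$ is, up to boundary error, $\bigl(2(X/4)^{1/3} p^{-4}\bigr)\bigl(2(X/27)^{1/2} p^{-6}\bigr)$, i.e.\ a factor $p^{-10}$ times the main term. Running the sieve over all squarefree products of primes (equivalently, writing the count as $\sum_{d \ge 1} \mu(d) \cdot (\text{pairs with } d^4 \mid A,\ d^6 \mid B)$) produces the Euler factor $\prod_p (1 - p^{-10}) = 1/\zeta(10)$, giving the main term $\frac{2^{4/3}}{3^{3/2}\zeta(10)} X^{5/6}$. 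The tail of the sieve (values of $d$ large enough that $d^4 > (X/4)^{1/3}$, i.e.\ $d \gg X^{1/12}$) must be estimated crudely — for such $d$ there are $O(1 + X^{1/2} d^{-6})$ pairs, and summing over $d \gtrsim X^{1/12}$ gives $O(X^{1/12})$, comfortably within the error term. Finally I would check that removing the finitely many (in fact zero-density) pairs with $\Delta_{E_{A,B}} = 0$, i.e.\ $4A^3 + 27B^2 = 0$, changes the count by only $O(X^{1/6})$ or so, since such pairs satisfy $A = -3t^2$, $B = 2t^3$ and are parametrized by a single integer $t$ with $|t| \ll X^{1/6}$.

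The main bookkeeping obstacle is tracking the boundary/error terms carefully enough to land at $O(X^{7/12})$ rather than the naive $O(X^{1/2})$ one gets from the box perimeter alone — in particular, Brumer's argument presumably gains on the error by a more careful treatment of the region and the sieve, and I would need to follow his estimates (or redo the two-variable counting with attention to the interplay between the $A$-range of length $\asymp X^{1/3}$ and the $B$-range of length $\asymp X^{1/2}$) to verify that the error is genuinely $O(X^{7/12})$. Since the statement is attributed to Brumer, the cleanest route is to invoke \cite[Lemma 4.3]{Bru92} and simply redo the elementary change of normalization: Brumer's height is a constant multiple of ours, so his $cX^{5/6}$ main term and error exponent carry over verbatim after rescaling $X$, and one only needs to recompute the leading constant to be $\frac{2^{4/3}}{3^{3/2}\zeta(10)}$.
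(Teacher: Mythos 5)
Your proposal is correct and follows essentially the same route as the paper: count lattice points $(A,B)$ in the box $|A|\le X^{1/3}/2^{2/3}$, $|B|\le X^{1/2}/3^{3/2}$, sieve out the condition ``$p^4\mid A$ and $p^6\mid B$ for some prime $p$'' by a M\"obius sum over $d$ with $d^4\mid A$, $d^6\mid B$ (the paper packages this as M\"obius inversion applied to the decomposition $\mathcal{D}^{\prime}(X)=\coprod_{d} d*\mathcal{M}(d^{-12}X)$, which is the same sieve), producing the factor $1/\zeta(10)$, and finally remove the singular curves via the parametrization $A=-3t^{2}$, $B=2t^{3}$ with $|t|\ll X^{1/6}$. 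One minor remark: your concern about ``landing at $O(X^{7/12})$ rather than the naive $O(X^{1/2})$'' has the comparison backwards, since $O(X^{1/2})$ is the \emph{stronger} bound; keeping the $d$-dependence in your per-$d$ boundary error $O(X^{1/2}d^{-6})$ and summing gives a total error $O(X^{1/2})$, which is at least as good as required, whereas the paper's stated $O(X^{7/12})$ arises from the cruder step of bounding the per-term error uniformly by $O(X^{1/2})$ and multiplying by the trivial estimate $\sum_{d\le X^{1/12}}\mu(d)=O(X^{1/12})$.
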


In Section \ref{section:distribution-of-cm-elliptic-curves-over-E} we will give Brumer's argument for the asymptotic formula \eqref{eqn:brumer-elliptic-count} under our normalization of the naive height and then we will prove the following theorem.

\begin{theorem}\label{thm:asymptotic-count-cm-elliptic-curves}
The number of CM elliptic curves in $\mathcal{E}^{\mathrm{cm}}(X)$ satisfies the asymptotic formula
\begin{align*}
\#\mathcal{E}^{\mathrm{cm}}(X) = \frac{2}{3^{3/2} \zeta(6)} X^{1/2} + \frac{2^{1/3}}{ \zeta(4)} X^{1/3} + O(X^{1/6}),
\end{align*}
as $X \to \infty$, where $\zeta(s)$ denotes the Riemann zeta function.
\end{theorem}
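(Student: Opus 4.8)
The plan is to decompose $\mathcal{E}^{\mathrm{cm}}(X)$ according to the $j$-invariant and count each piece separately. Recall that an elliptic curve $E/\Q$ has complex multiplication precisely when $\mathrm{End}_{\bar\Q}(E)$ is an order $\mathcal{O}$ in an imaginary quadratic field, and since $j(E)\in\Q$ this forces $\mathcal{O}$ to have class number one; there are exactly thirteen such orders, and thirteen corresponding rational $j$-invariants $j_1=0$, $j_2=1728$, and eleven further nonzero integers $j_3,\dots,j_{13}$. Writing $N_{j_0}(X):=\#\{E_{A,B}\in\mathcal{E}(X):j(E_{A,B})=j_0\}$, we have $\#\mathcal{E}^{\mathrm{cm}}(X)=\sum_{i=1}^{13}N_{j_i}(X)$, so the theorem will follow from an asymptotic formula for each $N_{j_i}(X)$; these $j$-invariant-wise counts are the real substance of the argument.

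First I would treat $j_0=0$. Since $j(E_{A,B})=0\iff A=0$, these curves are exactly the $E_{0,B}$ with $B\neq 0$, and because $p^4\mid 0$ for every prime $p$ the minimality condition defining $\mathcal{E}$ reduces to ``$B$ is sixth-power-free'', while the height condition reads $27B^2\le X$. Combining the classical estimate $\#\{1\le n\le Y:\ n\text{ is }k\text{-free}\}=Y/\zeta(k)+O(Y^{1/k})$ (Möbius inversion) with $k=6$ and $Y=(X/27)^{1/2}$, and a factor $2$ for the sign of $B$, gives $N_0(X)=\frac{2}{3^{3/2}\zeta(6)}X^{1/2}+O(X^{1/12})$. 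The case $j_0=1728$ is entirely analogous: $j(E_{A,B})=1728\iff B=0$, the minimality condition becomes ``$A$ is fourth-power-free'', the height condition is $4|A|^3\le X$, and the $k=4$ estimate with $Y=(X/4)^{1/3}$ yields $N_{1728}(X)=\frac{2^{1/3}}{\zeta(4)}X^{1/3}+O(X^{1/12})$.

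For each of the remaining eleven values $j_0$, the equation $j(E_{A,B})=j_0$ is equivalent to $A^3/B^2=r_0$, where $r_0:=\frac{27j_0}{4(1728-j_0)}\in\Q^{\times}$ is a fixed nonzero rational (so $A,B\neq 0$). The next step is a valuation-by-valuation analysis of the integer solutions $(A,B)$: there is an essentially unique ``primitive'' solution $(A_0,B_0)$, namely the one minimizing $|A_0|$, equivalently the one for which no prime $p$ satisfies both $v_p(A_0)\ge 2$ and $v_p(B_0)\ge 3$, and every integer solution has the form $(A_0t^2,\pm B_0t^3)$ with $t\ge 1$. One then checks that $(A_0t^2,\pm B_0t^3)\in\mathcal{E}$ if and only if $t$ is squarefree: if $v_p(t)\ge 2$ then $p^4\mid A_0t^2$ and $p^6\mid B_0t^3$, so the pair is not minimal, while for squarefree $t$ those same divisibilities would force $v_p(A_0)\ge 2$ and $v_p(B_0)\ge 3$, contradicting primitivity. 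Since $h^{\mathrm{naive}}(E_{A_0t^2,\pm B_0t^3})=t^6H_0$ with $H_0:=h^{\mathrm{naive}}(E_{A_0,B_0})$, counting squarefree $t\le(X/H_0)^{1/6}$ with a factor $2$ for the sign then gives $N_{j_0}(X)=\frac{2}{\zeta(2)H_0^{1/6}}X^{1/6}+O(X^{1/12})$, which in particular is $O(X^{1/6})$.

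Finally, adding the thirteen estimates yields $\#\mathcal{E}^{\mathrm{cm}}(X)=N_0(X)+N_{1728}(X)+\sum_{i=3}^{13}N_{j_i}(X)=\frac{2}{3^{3/2}\zeta(6)}X^{1/2}+\frac{2^{1/3}}{\zeta(4)}X^{1/3}+O(X^{1/6})$, the eleven ``small'' contributions (each of exact order $X^{1/6}$) together with all the $O(X^{1/12})$ errors being absorbed into the final error term. The main obstacle I anticipate is the structure result of the third paragraph: pinning down the primitive solution $(A_0,B_0)$, proving that every solution of $A^3/B^2=r_0$ is one of its quadratic twists $(A_0t^2,\pm B_0t^3)$, and, above all, verifying that membership in $\mathcal{E}$ is governed exactly by $t$ being squarefree, with no additional congruence conditions on $t$. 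Everything else is routine bookkeeping with the count of $k$-free integers, and no input beyond elementary analytic number theory is required (in particular, Theorem \ref{thm:brumer} plays no role here).
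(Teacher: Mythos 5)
Your proposal is correct, and its overall skeleton (decompose $\mathcal{E}^{\mathrm{cm}}(X)$ over the thirteen CM $j$-invariants, handle $j=0$ and $j=1728$ by counting $6$-free and $4$-free integers via the standard estimate for $k$-free numbers) is exactly the paper's. Where you genuinely diverge is in the eleven cases $j\neq 0,1728$: the paper only proves the upper bound $\#\mathcal{E}_j(X)=O(X^{1/6})$, via a lemma on integral points of bounded abscissa on the cuspidal cubic $y^2=\frac{4(1728-j)}{27j}x^3$ (rational parametrization through the singular point plus a divisor-counting argument), whereas you pin down the integer solutions of $A^3/B^2=r_0$ exactly as $(A_0t^2,\pm B_0t^3)$ for a primitive $(A_0,B_0)$ and show that the minimality condition defining $\mathcal{E}$ is equivalent to $t$ being squarefree; this gives an asymptotic $\frac{2}{\zeta(2)}H_0^{-1/6}X^{1/6}+O(X^{1/12})$ with an explicit leading constant, which is strictly stronger than what the paper establishes for the family $\mathcal{E}$ and essentially reproduces, inside $\mathcal{E}$ itself, the refined count the paper only obtains later for its twist families $\mathcal{ET}_j$ (its Theorem \ref{thm:cm-elliptic-curves-asymptotic-twist-count}). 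Your structural steps check out: primitivity (no prime $p$ with $v_p(A_0)\geq 2$ and $v_p(B_0)\geq 3$) forces the twisting parameter to be an integer, the two signs give distinct curves, nonvanishing of the discriminant is automatic since $r_0\neq -27/4$, and the height scales as $t^6H_0$; so both routes yield the required $O(X^{1/6})$ tail and the stated asymptotic for $\#\mathcal{E}^{\mathrm{cm}}(X)$, your version with the bonus of an explicit tertiary-order term for each $j\neq 0,1728$.
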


To illustrate the last two theorems, Table \ref{table:numbers-of-curves-and-the-asymptotic-approximations} presents the computed numbers of curves $\# \mathcal{E}(X)$ and $\# \mathcal{E}^{\mathrm{cm}}(X)$, along with their corresponding asymptotic approximations from Theorems \ref{thm:brumer} and \ref{thm:asymptotic-count-cm-elliptic-curves} for several values of $X$. 

\begin{table}[H]
{\tabulinesep=1.2mm
\begin{tabu}{c | c c | c c} \hline
$X$ & $\# \mathcal{E}(X)$ & $\dfrac{2^{4/3}}{3^{3/2} \zeta(10)} X^{5/6}$ & $\# \mathcal{E}^{\textrm{cm}}(X)$ & $\dfrac{2}{3^{3/2} \zeta(6)} X^{1/2} + \dfrac{2^{1/3}}{ \zeta(4)} X^{1/3}$ \\ \hline
$10$ & $2$ & $3.30060$ & $2$ & $3.70437$ \\ 
$10^2$ & $14$ & $22.4867$ & $6$ & $9.18661$ \\ 
$10^3$ & $166$ & $153.200$ & $24$ & $23.6050$ \\ 
$10^4$ & $1048$ & $1043.74$ & $66$ & $62.9134$ \\ 
$10^5$ & $7130$ & $7110.93$ & $180$ & $173.673$ \\ 
$10^6$ & $48070$ & $48446.2$ & $508$ & $494.747$ \\ 
$10^7$ & $329472$ & $330060.1$ & $1470$ & $1447.207$ \\ \hline
\end{tabu}}
\caption{The numbers of curves $\# \mathcal{E}(X)$ and $\# \mathcal{E}^{\mathrm{cm}}(X)$ and the corresponding asymptotic approximations from Theorems \ref{thm:brumer} and \ref{thm:asymptotic-count-cm-elliptic-curves}.}
\label{table:numbers-of-curves-and-the-asymptotic-approximations}
\end{table}

\subsection{The distribution of CM elliptic curves}

The basic theory of elliptic curves classifies the possibilities for the endomorphism ring $\operatorname{End}(E)$ of an elliptic curve $E/F$ over a field $F$. In particular, if the $\operatorname{char}(F) \neq 0$, the endomorphism ring $\operatorname{End}(E)$ is either isomorphic to $\Z$ or to an order $\mathcal{O}$ in an imaginary quadratic field (see e.g. \cite[Corollary III.9.4]{Sil09}).

We recall that if $K$ is an imaginary quadratic field with ring of integers $\mathcal{O}_K$, an order in $K$ is a subring $\mathcal{O} \subset K$ such that $\mathcal{O}$ is a finitely generated $\Z$-module and $\mathcal{O}$ contains a $\Q$-basis of $K$. Moreover, it is known that an order $\mathcal{O}$ in $K$ is a subring of the ring of integers $\mathcal{O}_K$ and has finite index in $\mathcal{O}_K$. In particular, if $f = [\mathcal{O}_K \colon \mathcal{O}]$ then 
$$
\mathcal{O} = \Z + f \mathcal{O}_K.
$$
The index $f$ is also called the conductor of the order $\mathcal{O}$. A good reference for these basic facts is the beautiful book of Cox \cite[\S 7]{Cox22}. Now, an important result in the theory of complex multiplication of elliptic curves says that if $E/\C$ is an elliptic curve with complex multiplication, then $j(E) \in \widebar{\Z}$ is an algebraic integer and moreover, if $\operatorname{End}(E) \cong \mathcal{O}$ for some order $\mathcal{O}$ in an imaginary quadratic field $K$, then 
\begin{align}\label{eq:j-invariant-degree-formula}
[\Q(j(E)) \colon \Q] = [K(j(E)) \colon K] = h(\mathcal{O}),  
\end{align}
where $h(\mathcal{O}) = \# Cl(\mathcal{O})$, i.e., the class number of the order $\mathcal{O}$ (see e.g. \cite[\S 12.4]{Hus04} or \cite[Theorem II.4.3]{Sil94}). Furthermore, if $E/\Q$ is an elliptic curve with $\operatorname{End}(E) \cong \mathcal{O}$, the degree formula \eqref{eq:j-invariant-degree-formula} implies that $h(\mathcal{O}) = 1$ because $j(E) \in \Q$. Thus, for elliptic curves $E/\Q$ with complex multiplication, the endomorphism ring $\operatorname{End}(E)$ can only be isomorphic to an order $\mathcal{O}$ with class number $h(\mathcal{O}) = 1$.

Now, it is known that $h(\mathcal{O})$ is an integer multiple of $h(\mathcal{O}_K)$ and moreover, that it satisfies the explicit formula
\begin{align}\label{eq:order-class-number-formula}
h(\mathcal{O})=\frac{h(\mathcal{O}_K) f}{\left[\mathcal{O}_K^*: \mathcal{O}^*\right]} \prod_{p \mid f}\left(1-\left(\frac{d_K}{p}\right) \frac{1}{p}\right),
\end{align}
where $d_K$ denotes the discriminant of $K$ and $\mathcal{O}_K^{*}$ and $\mathcal{O}^{*}$ denote the corresponding groups of units (see e.g. \cite[Theorem 7.24]{Cox22}). Hence, given that $h(\mathcal{O})$ is an integer multiple of $h(\mathcal{O}_K)$, a necessary condition for $h(\mathcal{O})$ to be one is that $h(\mathcal{O}_K)$ = 1, that is, the imaginary quadratic field $K$ must have class number 1. In this respect, the famous resolution of Gauss' class number one problem by Heegner in 1952 \cite{Hee52} and later independently in 1966 by Baker \cite{Bak66} and Stark \cite{Sta67a, Sta67b}, determined that there are exactly nine imaginary quadratic fields $K = \Q(\sqrt{d_K})$ with $h(\mathcal{O}_K) = 1$; namely those of discriminant 
$$
d_K \in \{-3,-4,-7,-8,-11,-19,-43,-67,-163 \}.
$$
Then, from this and the explicit formula \eqref{eq:order-class-number-formula}, it can be proved that there are exactly thirteen orders $\mathcal{O}$ of imaginary quadratic fields with class number $h(\mathcal{O}) = 1$ (see e.g. \cite[Theorem 7.30]{Cox22}). More precisely, the thirteen orders of class number one are determined by the first two columns of Table \ref{tab:fixed_cm_curves_table}, which specify the discriminant of the imaginary quadratic field $K$ and the conductor $f$ of the order $\mathcal{O} = \Z + f \mathcal{O}_K$. This table also includes a representative in short Weierstrass form for each $\C$-isomorphism class of CM elliptic curves over $\Q$ and the corresponding LMFDB label \cite{LMFDB}. Moreover, since the $j$-invariant determines the $\C$-isomorphism class, the table also includes the values of the corresponding $j$-invariants, which we shall refer to as the thirteen CM $j$-invariants of elliptic curves over $\Q$ with complex multiplication. See also \cite[p. 483]{Sil94}.

\begin{table}[H]
\begin{tabular}{|c|c|c|c|c|}
\hline
$d_K$ & $f$ & Short Weierstrass equation $E/\mathbb{Q}$ & $j$-invariant of $E$ & LMFDB label \\
\hline \hline
\multirow{3}{*}{$-3$} & 1 & $y^2 = x^3 + 1$ & $0$ & \href{https://www.lmfdb.org/EllipticCurve/Q/36/a/4}{36.a4} \\
& 2 & $y^2 = x^3 - 15x + 22$ & $2^4 \cdot 3^3 \cdot 5^3$ & \href{https://www.lmfdb.org/EllipticCurve/Q/36/a/2}{36.a2} \\
& 3 & $y^2 = x^3 - 120x + 506$ & $-2^{15} \cdot 3 \cdot 5^3 $ & \href{https://www.lmfdb.org/EllipticCurve/Q/1728/n/2}{1728.n2} \\
\hline
\multirow{2}{*}{$-4$} & 1 & $y^2 = x^3 + x$ & $2^6 3^3 = 1728$ & \href{https://www.lmfdb.org/EllipticCurve/Q/64/a/4}{64.a4} \\
& 2 & $y^2 = x^3 - 11x + 14$ & $2^3 \cdot 3^3 \cdot 11^3$ & \href{https://www.lmfdb.org/EllipticCurve/Q/32/a/2}{32.a2} \\
\hline
\multirow{2}{*}{$-7$} & 1 & $y^2 = x^3 - 35x + 98$ & $-3^3 \cdot 5^3$ & \href{https://www.lmfdb.org/EllipticCurve/Q/784/f/4}{784.f4} \\
& 2 & $y^2 = x^3 - 595x + 5586$ & $3^3 \cdot 5^3 \cdot 17^3$ & \href{https://www.lmfdb.org/EllipticCurve/Q/784/f/3}{784.f3} \\
\hline
$-8$ & 1 & $y^2 = x^3 - 30x + 56$ & $2^6 \cdot 5^3$ & \href{https://www.lmfdb.org/EllipticCurve/Q/2304/h/2}{2304.h2} \\
$-11$ & 1 & $y^2=x^3-1056x+13552$ & $-2^{15}$ & \href{https://www.lmfdb.org/EllipticCurve/Q/17424/cb/2}{17424.cb2} \\
$-19$ & 1 & $y^2 = x^3 - 152x + 722$ & $-2^{15} \cdot 3^3$ & \href{https://www.lmfdb.org/EllipticCurve/Q/23104/bc/2}{23104.bc2} \\
$-43$ & 1 & $y^2 = x^3 - 3440x + 77658$ & $-2^{18} \cdot 3^3 \cdot 5^3$ & \href{https://www.lmfdb.org/EllipticCurve/Q/118336/v/2}{118336.v2} \\
$-67$ & 1 & $y^2 = x^3 - 29480x + 1948226$ & $-2^{15} \cdot 3^3 \cdot 5^3 \cdot 11^3$ & \href{https://www.lmfdb.org/EllipticCurve/Q/287296/h/2}{287296.h2} \\
$-163$ & 1 & $y^2 = x^3 - 34790720x - 78984748304$ & $-2^{18} \cdot 3^3 \cdot 5^3 \cdot 23^3 \cdot 29^3$ & \href{https://www.lmfdb.org/EllipticCurve/Q/425104/g/2}{425104.g2} \\
\hline
\end{tabular}
\caption{Representatives in short Weierstrass form for the 13 $\C$-isomorphism classes of CM elliptic curves over $\Q$ with $\operatorname{End}(E) \cong \Z + f\mathcal{O}_K$, where $K = \Q(\sqrt{d_K})$.}
\label{tab:fixed_cm_curves_table}
\end{table}

Even though, as we have already showed in Theorem \ref{thm:density-of-cm-elliptic-curves}, the natural density of CM elliptic curves over $\Q$ is 0, knowing that these CM elliptic curves fall into the thirteen different classes determined by the thirteen orders of class number one, a natural question to ask is the following.

\begin{question}\label{question:distribution}
How are the CM elliptic curves over $\Q$ distributed among the thirteen possible CM orders of class number one?
\end{question}

To investigate this question, we introduce the following notation. For any $j \in \Q$, let
\begin{align}\label{def:E_j}
    \mathcal{E}_j := \{ E_{A,B} \in \mathcal{E} \suchthat j(E_{A,B}) = j\}.
\end{align}
In other words, $\mathcal{E}_j$ is the subset of all elliptic curves in $\mathcal{E}$ with $j$-invariant equal to $j$. Moreover, let $\mathcal{J}^{\textrm{cm}}$ be the set of all the possible thirteen $j$-invariants of CM elliptic curves over $\Q$, as appearing in Table \ref{tab:fixed_cm_curves_table}. This set is given explicitly by
\begin{equation}
\begin{aligned}\label{eqn:set-of-cm-j-invariants}
\mathcal{J}^{\textrm{cm}} = \{
& 0, 1728, -3375, 8000, -32768, 54000, 287496, -12288000,\\
& 16581375, -884736, -884736000, -147197952000, -262537412640768000\}.
\end{aligned}
\end{equation}

For a given $j \in \mathcal{J}^{\textrm{cm}}$, every elliptic curve $E_{A, B} \in \mathcal{E}_j$ has complex multiplication by the order corresponding to the given CM $j$-invariant. Hence, for a CM $j$-invariant $j \in \mathcal{J}^{\textrm{cm}}$ we have
$$
\mathcal{E}_j \subset \mathcal{E}^{\textrm{cm}}.
$$

We will also let $\mathcal{E}_j(X) := \mathcal{E}_j \cap \mathcal{E}(X)$. Then, the classification of CM elliptic curves over $\Q$ implies that we can decompose the families $\mathcal{E}^{\textrm{cm}}$ and $\mathcal{E}^{\textrm{cm}}(X)$ as disjoint unions given by
\begin{align}\label{eqn:decomposition_cm_elliptic_curves}
    \mathcal{E}^{\mathrm{cm}} = \coprod_{j \in \mathcal{J}^{\textrm{cm}}} \mathcal{E}_j \quad \text{and} \quad \mathcal{E}^{\mathrm{cm}}(X) = \coprod_{j \in \mathcal{J}^{\textrm{cm}}} \mathcal{E}_j(X)
\end{align}
for every $X > 0$.

Having set up these definitions, we now address Question \ref{question:distribution}. Just like we did with the question of the density of CM elliptic curves, we first explored Question \ref{question:distribution} by conducting some numerical investigations using \texttt{SageMath}. The resulting data is presented in Table \ref{table:densities-cm-j-invariants}. Interestingly, this data suggests that although for each of the thirteen CM orders listed in Table \ref{table:densities-cm-j-invariants} there are infinitely many elliptic curves over $\mathbb{Q}$ with complex multiplication by that order, those with $j$-invariant 0—i.e., those with $\operatorname{End}(E) \cong \mathbb{Z}\left[ \frac{-1 + \sqrt{-3}}{2} \right]$—seem to occur much more frequently. Indeed, Table \ref{table:density-j-0} illustrates the trend of the proportion of CM elliptic curves with $j$-invariant 0 and bounded height as the bound on the height increases. We confirm what the trend in Table \ref{table:density-j-0} suggests in the next theorem.

\begin{theorem}\label{thm:density-j-0}
The elliptic curves with $j$-invariant 0 comprise $100\%$ of all the CM elliptic curves in $\mathcal{E}^{\textrm{cm}}$. More precisely, the natural density of the family $\mathcal{E}_0$ in $\mathcal{E}^{\mathrm{cm}}$ equals $1$, that is
\begin{align*}
d(\mathcal{E}_0) = \lim_{X\to \infty} \frac{\#\mathcal{E}_0(X)}{\#\mathcal{E}^{\mathrm{cm}}(X)} = 1.
\end{align*}
\end{theorem}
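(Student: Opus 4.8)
The plan is to isolate the contribution of $j$-invariant $0$ inside $\mathcal{E}^{\mathrm{cm}}(X)$ and compare it with the full count supplied by Theorem~\ref{thm:asymptotic-count-cm-elliptic-curves}. By the decomposition \eqref{eqn:decomposition_cm_elliptic_curves} we have $\mathcal{E}_0(X) \subseteq \mathcal{E}^{\mathrm{cm}}(X)$, so $d(\mathcal{E}_0) = 1$ will follow once we show that $\#\mathcal{E}_0(X)$ and $\#\mathcal{E}^{\mathrm{cm}}(X)$ share the same leading asymptotic term. Concretely, the goal is
\[
\#\mathcal{E}_0(X) = \frac{2}{3^{3/2}\,\zeta(6)}\, X^{1/2} + O(X^{1/12}), \qquad X \to \infty,
\]
which is precisely the leading term of $\#\mathcal{E}^{\mathrm{cm}}(X)$ in Theorem~\ref{thm:asymptotic-count-cm-elliptic-curves}, the remaining term $\tfrac{2^{1/3}}{\zeta(4)} X^{1/3}$ being of strictly smaller order.

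First I would describe $\mathcal{E}_0$ explicitly. Using $j(E_{A,B}) = 1728\cdot \tfrac{4A^3}{4A^3 + 27B^2}$, a curve $E_{A,B}\in\mathcal{E}$ satisfies $j(E_{A,B}) = 0$ if and only if $A = 0$. For such a curve the defining conditions of $\mathcal{E}$ — namely $\Delta_{E_{0,B}} = -16\cdot 27 B^2 \neq 0$ and the nonexistence of a prime $p$ with $p^4 \mid 0$ and $p^6 \mid B$ — reduce to: $B \neq 0$ and $B$ is sixth-power-free. Since $h^{\mathrm{naive}}(E_{0,B}) = \max\{0,\, 27B^2\} = 27B^2$, we obtain
\[
\#\mathcal{E}_0(X) = \#\{\, B \in \Z : B \neq 0,\ B \text{ sixth-power-free},\ 27 B^2 \leq X \,\} = 2\, Q_6\!\left(\left\lfloor (X/27)^{1/2} \right\rfloor\right),
\]
where $Q_6(N) := \#\{\, 1 \leq n \leq N : n \text{ sixth-power-free}\,\}$.

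Next I would evaluate $Q_6(N)$ by the standard Möbius sieve over sixth powers: $Q_6(N) = \sum_{d \leq N^{1/6}} \mu(d)\lfloor N/d^6 \rfloor = N\sum_{d=1}^{\infty} \mu(d) d^{-6} + O(N^{1/6}) = N/\zeta(6) + O(N^{1/6})$. Substituting $N = (X/27)^{1/2} = X^{1/2}/3^{3/2}$ and multiplying by $2$ gives the displayed asymptotic for $\#\mathcal{E}_0(X)$, with error $O\big((X^{1/2})^{1/6}\big) = O(X^{1/12})$.

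Finally, combining with Theorem~\ref{thm:asymptotic-count-cm-elliptic-curves},
\[
\frac{\#\mathcal{E}_0(X)}{\#\mathcal{E}^{\mathrm{cm}}(X)} = \frac{\dfrac{2}{3^{3/2}\zeta(6)}X^{1/2} + O(X^{1/12})}{\dfrac{2}{3^{3/2}\zeta(6)}X^{1/2} + \dfrac{2^{1/3}}{\zeta(4)}X^{1/3} + O(X^{1/6})} \longrightarrow 1
\]
as $X \to \infty$, since dividing numerator and denominator by $X^{1/2}$ leaves the common nonzero constant $\tfrac{2}{3^{3/2}\zeta(6)}$ plus terms tending to $0$. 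This yields $d(\mathcal{E}_0) = 1$. I expect no essential obstacle: all the arithmetic depth sits in Theorem~\ref{thm:asymptotic-count-cm-elliptic-curves}, and the only thing to watch is carrying the normalization constant $27 = 3^3$ correctly through the height bound $27B^2 \le X$ so that the leading coefficients of $\#\mathcal{E}_0(X)$ and $\#\mathcal{E}^{\mathrm{cm}}(X)$ genuinely coincide. One could alternatively argue that $\sum_{j\in\mathcal{J}^{\mathrm{cm}},\, j\neq 0}\#\mathcal{E}_j(X) = O(X^{1/3}) = o(\#\mathcal{E}_0(X))$, but comparing the two asymptotics directly is cleaner.
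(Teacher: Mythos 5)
Your proposal is correct and follows essentially the same route as the paper: the paper derives $\#\mathcal{E}_0(X) = \frac{2}{3^{3/2}\zeta(6)}X^{1/2} + O(X^{1/12})$ by exactly the same sixth-power-free counting (Proposition \ref{thm:density-k-free-integers} with $k=6$, i.e.\ your Möbius sieve) inside Theorem \ref{thm:asymptotic-for-curves-with-given-j}, and then obtains $d(\mathcal{E}_0)=1$ by dividing this by the asymptotic for $\#\mathcal{E}^{\mathrm{cm}}(X)$, just as you do. Your handling of the normalization $27B^2 \le X$ and of the condition ``no prime $p$ with $p^4\mid 0$ and $p^6\mid B$'' matches the paper's argument.
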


This theorem will be proved in Section \ref{section:distribution-of-cm-elliptic-curves-over-E} and it will be a consequence of the next result, which gives asymptotic formulas for the number of elliptic curves inside the family $\mathcal{E}$ with a given $j$-invariant and bounded naive height.

\begin{theorem}\label{thm:asymptotic-for-curves-with-given-j}
Let $j \in \Q$. Then
\begin{align}\label{eqn:Ej-asymptotic}
    \#\mathcal{E}_j(X) = O(X^{1/m(j)}),
\end{align}
where
\begin{align*}
    m(j) =
        \begin{cases}
            6 & \text{if } j\neq 0, 1728, \\
            3 & \text{if } j = 1728, \\
            2 & \text{if } j = 0.
        \end{cases}
\end{align*}
Moreover, for $j = 0$ and $j = 1728$ we have the asymptotic formulas
\begin{align}
    \#\mathcal{E}_0(X) = \frac{2}{3^{3/2} \zeta(6)} X^{1/2} + O(X^{1/12})  \quad \text{and} \quad \#\mathcal{E}_{1728}(X) = \frac{2^{1/3}}{ \zeta(4)} X^{1/3} + O(X^{1/12})
\end{align}
as $X \to \infty$. In particular, these asymptotic formulas combined imply that the number of CM elliptic curves in $\mathcal{E}^{\mathrm{cm}}(X)$ satisfies the asymptotic formula
\begin{align}\label{eqn:ECM-asymptotic}
\#\mathcal{E}^{\mathrm{cm}}(X) = \frac{2}{3^{3/2} \zeta(6)} X^{1/2} + \frac{2^{1/3}}{ \zeta(4)} X^{1/3} + O(X^{1/6}),
\end{align}
as $X \to \infty$.
\end{theorem}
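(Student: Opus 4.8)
\emph{Plan of proof.} The argument rests on the classical formula $j(E_{A,B}) = 1728\cdot 4A^{3}/(4A^{3}+27B^{2})$. From it, $j(E_{A,B}) = 0$ forces $A = 0$, $j(E_{A,B}) = 1728$ forces $B = 0$, and for $j \neq 0,1728$ the identity $j(E_{A,B}) = j$ is equivalent to the single Diophantine equation $4(1728-j)A^{3} = 27j\,B^{2}$, i.e. $A^{3}/B^{2}$ equals a fixed nonzero rational number $c = c(j)$. I would treat these three cases in turn; in all of them the only analytic input is the elementary estimate
$$\#\{\,1 \le n \le Y : n \text{ is } k\text{-free}\,\} = \frac{Y}{\zeta(k)} + O\!\left(Y^{1/k}\right),$$
which follows from $\sum_{d^{k}\mid n}\mu(d) = \mathbf{1}[n \text{ is } k\text{-free}]$ by summing over $n \le Y$ and truncating the outer sum at $d \le Y^{1/k}$.

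\emph{The cases $j = 0$ and $j = 1728$.} When $A = 0$ the defining congruence condition of $\mathcal{E}$ collapses (since $p^{4}\mid 0$ for every prime $p$) to the requirement that $B$ be sixth-power-free, so $\mathcal{E}_{0}$ consists of the curves $y^{2} = x^{3} + B$ with $B \in \Z\setminus\{0\}$ sixth-power-free; as $h^{\textrm{naive}}(E_{0,B}) = 27B^{2}$, I would write $\#\mathcal{E}_{0}(X) = 2\,\#\{\,1 \le B \le (X/27)^{1/2} : B \text{ sixth-power-free}\,\}$ and apply the estimate with $k = 6$ and $Y = (X/27)^{1/2}$ to obtain $\#\mathcal{E}_{0}(X) = \tfrac{2}{3^{3/2}\zeta(6)}X^{1/2} + O(X^{1/12})$. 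Symmetrically, $\mathcal{E}_{1728}$ consists of the curves $y^{2} = x^{3} + Ax$ with $A \in \Z\setminus\{0\}$ fourth-power-free, $h^{\textrm{naive}}(E_{A,0}) = 4|A|^{3}$, and the same count with $k = 4$ and $Y = (X/4)^{1/3}$ gives $\#\mathcal{E}_{1728}(X) = \tfrac{2^{1/3}}{\zeta(4)}X^{1/3} + O(X^{1/12})$.

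\emph{The case $j \neq 0,1728$.} Here $A,B \neq 0$ and, writing $c(j) = a/b$ in lowest terms, $(A,B)$ ranges over the integer solutions of $bA^{3} = aB^{2}$. The structural point---which I expect to be the heart of the proof---is that these solutions are exactly the pairs $(A,B) = (A_{0}m^{2},\,\pm B_{0}m^{3})$ with $m \ge 1$, where $(A_{0},B_{0})$ is the (necessarily unique) solution for which $v_{p}(A)$ is minimal at every prime $p$. This is obtained by solving $3v_{p}(A) - 2v_{p}(B) = v_{p}(a) - v_{p}(b)$ prime by prime, the nonnegative solution set of $3x - 2y = k$ being $\{(x_{0}+2s,\,y_{0}+3s) : s \ge 0\}$; the same description also shows $(A_{0},B_{0}) \in \mathcal{E}$, since $v_{p}(A_{0}) \ge 4$ and $v_{p}(B_{0}) \ge 6$ would make $(v_{p}(A_{0})-2,\,v_{p}(B_{0})-3)$ a strictly smaller nonnegative solution. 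Since $h^{\textrm{naive}}(E_{A_{0}m^{2},\,B_{0}m^{3}}) = m^{6}\,h^{\textrm{naive}}(E_{A_{0},B_{0}})$, it follows that
$$\#\mathcal{E}_{j}(X) \;\le\; 2\,\#\{\,m \ge 1 : m^{6}\,h^{\textrm{naive}}(E_{A_{0},B_{0}}) \le X\,\} \;=\; O\!\left(X^{1/6}\right),$$
which is the asserted bound $O(X^{1/m(j)})$.

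\emph{Assembling the formula for $\mathcal{E}^{\mathrm{cm}}(X)$.} Finally, I would use the decomposition $\mathcal{E}^{\mathrm{cm}}(X) = \coprod_{j\in\mathcal{J}^{\textrm{cm}}}\mathcal{E}_{j}(X)$, noting that $0,1728 \in \mathcal{J}^{\textrm{cm}}$ while the remaining eleven CM $j$-invariants differ from $0$ and $1728$: adding the two asymptotics above and bounding the eleven remaining terms by $11\cdot O(X^{1/6}) = O(X^{1/6})$---which also absorbs the two $O(X^{1/12})$ error terms---yields \eqref{eqn:ECM-asymptotic}. The only genuinely delicate step is the structural description of the integer points on $bA^{3} = aB^{2}$ in the third case (and the verification that the distinguished solution lies in $\mathcal{E}$); the rest is bookkeeping with the power-free count.
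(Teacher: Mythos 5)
Your argument is correct, and for the cases $j=0$ and $j=1728$ it coincides with the paper's: both reduce to counting $6$-free (resp.\ $4$-free) integers in an interval via the estimate of Proposition \ref{thm:density-k-free-integers}, with the same constants and the same $O(X^{1/12})$ errors. For the key case $j\neq 0,1728$ you take a genuinely different route. The paper rewrites $j(E_{A,B})=j$ as the condition that $(A,B)$ lies on the cuspidal cubic $y^{2}=\frac{1728-4j}{27j}x^{3}$ and invokes Lemma \ref{lemma:technical_lemma}, which bounds the integral points with $|x_0|\le T$ by $2\sigma_0(q)\sqrt{|pq|}\,T^{1/2}$ via the rational parametrization $t\mapsto (t^2/a, t^3/a)$ and a divisor-counting argument; with $T=X^{1/3}/2^{2/3}$ this gives $O(X^{1/6})$. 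You instead prove a valuation-theoretic structure theorem: solving $3v_p(A)-2v_p(B)=v_p(c)$ prime by prime, every integral solution of $bA^{3}=aB^{2}$ is $(A_0m^{2},\pm B_0m^{3})$ for a unique minimal solution $(A_0,B_0)$, and since the naive height scales like $m^{6}$ you get $\#\mathcal{E}_j(X)\le 2(X/h^{\textrm{naive}}(E_{A_0,B_0}))^{1/6}$. This is essentially the explicit twist description of Proposition \ref{prop:twist-equations} rediscovered by hand (the family $(D^{2}A_0, D^{3}B_0)$), and it actually yields slightly more than the paper's lemma: an exact one-parameter description of the integral points, from which one could extract a main term of order $X^{1/6}$ for this case, in the spirit of Theorem \ref{thm:cm-elliptic-curves-asymptotic-twist-count}, whereas the paper's Lemma \ref{lemma:technical_lemma} only provides an upper bound (which is all the statement requires). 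Two small remarks: your verification that $(A_0,B_0)$ satisfies the minimality/$\mathcal{E}$-membership condition is correct but not needed for the upper bound, since you discard condition (4) anyway; and $(A_0,B_0)$ is unique only up to the sign of $B_0$, which your $\pm B_0m^{3}$ already accommodates. The final assembly over the thirteen CM $j$-invariants matches the paper's use of the disjoint decomposition of $\mathcal{E}^{\mathrm{cm}}(X)$.
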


\begin{remark}
A different way of studying the distribution of CM elliptic curves over $\Q$ among the thirteen different orders of class number 1 is explored in Section \ref{section:distribution-of-cm-elliptic-curves-using-twists}. This is done in terms of the theory of twists. In particular, in that setting we are able to give a leading term for the count of curves with $j$-invariant $j \neq 0, 1728$ in Theorem \ref{thm:cm-elliptic-curves-asymptotic-twist-count}.
\end{remark}

\begin{table}[H]
{\tabulinesep=1.2mm
\begin{tabu}{c c c c c} \hline
$d_K$ & $f$ & $j$-invariant & $\mathcal{E}_{j}(10^{10})$ & $\dfrac{\mathcal{E}_{j}(10^{10})}{\mathcal{E}^{\text{cm}}(10^{10})}$ \\ \hline
\multirow{3}{*}{$-3$} & $1$ & $0$ & $37836$ & $0.936303$ \\
& $2$ & $2^4 \cdot 3^3 \cdot 5^3$ & $12$ & $0.000296956$ \\
& $3$ & $-2^{15} \cdot 3 \cdot 5^3$ & $6$ & $0.000148478$ \\ \hline
\multirow{2}{*}{$-4$} & $1$ & $2^6 \cdot 3^3 = 1728$ & $2512$ & $0.0621628$ \\
& $2$ & $2^3 \cdot 3^3 \cdot 11^3$ & $16$ & $0.000395942$ \\ \hline
\multirow{2}{*}{$-7$} & $1$ & $-3^3 \cdot 5^3$ & $8$ & $0.000197971$ \\
& $2$ & $3^3 \cdot 5^3 \cdot 17^3$ & $2$ & $0.0000494927$ \\ \hline
$-8$ & $1$ & $2^6 \cdot 5^3$ & $10$ & $0.000247463$ \\
$-11$ & $1$ & $-2^{15}$ & $4$ & $0.0000989854$ \\
$-19$ & $1$ & $-2^{15} \cdot 3^3$ & $4$ & $0.0000989854$ \\
$-43$ & $1$ & $-2^{18} \cdot 3^3 \cdot 5^3$ & $0$ & $0$ \\
$-67$ & $1$ & $-2^{15} \cdot 3^3 \cdot 5^3 \cdot 11^3$ & $0$ & $0$ \\
$-163$ & $1$ & $-2^{18} \cdot 3^3 \cdot 5^3 \cdot 23^3 \cdot 29^3$ & $0$ & $0$ \\ \hline \\
\end{tabu}}
\caption{The proportions of elliptic curves in the family $\mathcal{E}^{\mathrm{cm}}$ with CM by the different orders of class number 1 and naive height $h^{\textrm{naive}}(E) \leq 10^{10}$.}
\label{table:densities-cm-j-invariants}
\end{table}

\begin{table}[H]
{\tabulinesep=1.2mm
\begin{tabu}{c c c c c} \hline
$X$ & $\#\mathcal{E}^{\textrm{cm}}(X)$ & $\#\mathcal{E}_0(X)$ & $\dfrac{2}{3^{3/2} \zeta(6)} X^{1/2}$  & $\dfrac{\mathcal{E}_{0}(X)}{\mathcal{E}^{\text{cm}}(X)}$ \\ \hline
$10^2$ & $6$ & $2$ & $3.7833863$ & $0.33333$ \\
$10^3$ & $24$ & $12$ & $11.964118$ & $0.5$ \\
$10^4$ & $66$ & $38$ & $37.833863$ & $0.57576$ \\
$10^5$ & $180$ & $120$ & $119.64118$ & $0.66667$ \\
$10^6$ & $508$ & $378$ & $378.33863$ & $0.74409$ \\ 
$10^7$ & $1470$ & $1198$ & $1196.4118$ & $0.81497$ \\
$10^8$ & $4356$ & $3784$ & $3783.3863$ & $0.86869$\\
$10^9$ & $13174$ & $11964$ & $11964.118$ & $0.90815$\\
$10^{10}$ & $40410$ & $37836$ & $37833.863$ & $0.93630$\\
$10^{11}$ & $125336$ & $119646$ & $119641.18$ & $0.95460$\\
$10^{12}$ & $390312$ & $378342$ & $378338.63$ & $0.96933$ \\
\hline \\
\end{tabu}}
\caption{The natural density of elliptic curves with CM having $j$-invariant 0 inside the family $\mathcal{E}^{\textrm{cm}}$ with naive height $h^{\textrm{naive}}(E) \leq X$ and the asymptotic for $\#\mathcal{E}_0(X)$.}
\label{table:density-j-0}
\end{table}


\subsection{Organization of the paper}

The paper is organized as follows. In Section \ref{section:distribution-of-cm-elliptic-curves-over-E} we prove Theorems \ref{thm:density-of-cm-elliptic-curves}, \ref{thm:brumer}, \ref{thm:asymptotic-count-cm-elliptic-curves}, \ref{thm:density-j-0} and \ref{thm:asymptotic-for-curves-with-given-j}. Then, in Section \ref{section:twists} we recall some basic facts about twists of elliptic curves and in Section \ref{section:distribution-of-cm-elliptic-curves-using-twists} we use the theory of twists to give an alternative way of studying the distribution of CM elliptic curves over $\Q$ among the thirteen CM orders of class number one.

\section{The distribution of CM elliptic curves over \texorpdfstring{$\Q$}{Q} in the family \texorpdfstring{$\mathcal{E}$}{E} }\label{section:distribution-of-cm-elliptic-curves-over-E}

As stated in the introduction, Brumer in \cite[Lemma 4.3]{Bru92} provided a count of elliptic curves in the family $\mathcal{E}$ with height bounded by $X$ using a different normalization for the naive height than the one we use. For the reader's convenience, we adapt Brumer's argument to align with our height normalization. This adjustment not only ensures consistency within our framework but also we describe in more detail some ideas that lay the groundwork for our subsequent enumeration of CM elliptic curves.

\begin{theorem}
The number of elliptic curves in $\mathcal{E}(X)$ satisfies the asymptotic formula
\begin{align}\label{eqn:elliptic-count}
\# \mathcal{E}(X) = \frac{2^{4/3}}{3^{3/2} \zeta(10)} X^{5/6} + O(X^{7/12}),
\end{align}
as $X \to \infty$, where $\zeta(s)$ denotes the Riemann zeta function.
\end{theorem}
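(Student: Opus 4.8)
The plan is to reduce to a lattice-point count and sieve out the non-minimal pairs by Möbius inversion. Unwinding the definitions, $\#\mathcal{E}(X)$ equals the number of $(A,B) \in \Z^2$ with $|A| \le (X/4)^{1/3}$, $|B| \le (X/27)^{1/2}$, $4A^3 + 27B^2 \ne 0$, and $(A,B)$ \emph{minimal} in the sense that there is no prime $p$ with $p^4 \mid A$ and $p^6 \mid B$. First I would dispose of the locus $\Delta = 0$: the integral solutions of $4A^3 + 27B^2 = 0$ are exactly $(A,B) = (-3t^2, \pm 2t^3)$, so only $O(X^{1/6})$ of them lie in the box (and the origin, which is among them, is not minimal anyway). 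The minimality condition is then encoded by Möbius inversion: for $(A,B) \ne (0,0)$ one has $\mathbf{1}_{\{(A,B)\ \mathrm{minimal}\}} = \sum_{d\ge 1,\ d^4 \mid A,\ d^6 \mid B} \mu(d)$, the sum being finite since $d^4 \mid A$ (or $d^6 \mid B$ when $A = 0$) forces $d \le (X/4)^{1/12}$ on the box.

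Interchanging the two summations gives
\[
\#\mathcal{E}(X) = \sum_{d \le (X/4)^{1/12}} \mu(d)\, N_d(X) + O(X^{1/6}),
\]
where $N_d(X) := \#\{(A,B)\in\Z^2 : |A| \le (X/4)^{1/3},\ |B| \le (X/27)^{1/2},\ d^4 \mid A,\ d^6 \mid B\}$. Substituting $A = d^4 A'$, $B = d^6 B'$ shows $(A',B')$ ranges over a box of half-widths $(X/4)^{1/3} d^{-4}$ and $(X/27)^{1/2} d^{-6}$, so counting lattice points there and using $\lfloor t\rfloor = t + O(1)$ uniformly for $t \ge 0$,
\[
N_d(X) = \Bigl(\frac{2 X^{1/3}}{4^{1/3} d^4} + O(1)\Bigr)\Bigl(\frac{2 X^{1/2}}{27^{1/2} d^6} + O(1)\Bigr) = \frac{2^{4/3}}{3^{3/2}}\cdot\frac{X^{5/6}}{d^{10}} + O\!\left(\frac{X^{1/2}}{d^6}\right) + O\!\left(\frac{X^{1/3}}{d^4}\right) + O(1).
\]
Multiplying by $\mu(d)$ and summing, the main term is $\frac{2^{4/3}}{3^{3/2}} X^{5/6}\sum_{d\ge 1}\mu(d)d^{-10} + O(X^{5/6}\cdot X^{-3/4}) = \frac{2^{4/3}}{3^{3/2}\zeta(10)}X^{5/6} + O(X^{1/12})$, where $\sum_d \mu(d)d^{-s} = 1/\zeta(s)$ and the tail of the Möbius series beyond $(X/4)^{1/12}$ is $O(X^{-3/4})$; the three $O(\cdot)$ terms contribute $O(X^{1/2})$, $O(X^{1/3})$ and $O(X^{1/12})$ after summation over $d$. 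All of these, together with the $O(X^{1/6})$ from $\Delta = 0$, are $O(X^{1/2})$ and in particular absorbed into the claimed $O(X^{7/12})$, which yields \eqref{eqn:elliptic-count}.

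The computation is largely routine bookkeeping, and the two things worth being careful about are the following. First, one must check that the inner Möbius sum is genuinely finite so that the interchange of summation is valid — this is the observation that divisibility by $d^4$ (or $d^6$) caps $d$ at $O(X^{1/12})$ on the box, which is simultaneously what makes the truncation of the outer sum legitimate. Second, one must pin down the leading constant: the minimality sieve is exactly what turns the naive box count $\frac{2^{4/3}}{3^{3/2}} X^{5/6}$ into $\frac{2^{4/3}}{3^{3/2}\zeta(10)} X^{5/6}$, the factor $1/\zeta(10)$ coming from $\sum_d \mu(d)/d^{10}$; this is the structural shape (with $d^{10}$ replaced by lower powers) that reappears in the CM counts of Theorems~\ref{thm:asymptotic-count-cm-elliptic-curves} and \ref{thm:asymptotic-for-curves-with-given-j}. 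The only mildly delicate point is controlling the boundary of the rescaled box uniformly in $d$ up to $d \approx X^{1/12}$, where that box degenerates, but the crude estimate $2\lfloor t\rfloor + 1 = 2t + O(1)$ valid for all $t \ge 0$ handles this without trouble.
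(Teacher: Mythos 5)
Your proposal is correct and follows essentially the same route as the paper: a M\"obius sieve over the scaling $(A,B)\mapsto(d^4A,d^6B)$ combined with a lattice-point count in the box $|A|\le (X/4)^{1/3}$, $|B|\le (X/27)^{1/2}$ and an $O(X^{1/6})$ correction for the singular locus $4A^3+27B^2=0$; the paper merely phrases the sieve as M\"obius inversion of the summatory identity $\#\mathcal{D}^{\prime}(X)=\sum_{d\le X^{1/12}}\#\mathcal{M}(d^{-12}X)$ rather than as your pointwise indicator sum $\sum_{d^4\mid A,\, d^6\mid B}\mu(d)$. Your bookkeeping, which retains the $d$-dependence in the error terms, actually gives the slightly sharper error $O(X^{1/2})$, which of course implies the stated $O(X^{7/12})$.
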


\begin{proof}
First we define some families of curves. Recall that for any point $(A, B) \in \Z^2$ we define the curve $E_{A, B} \colon y^2 = x^3 + Ax + B$. Then, for $X > 0$ let
\begin{align*}
\mathcal{D}(X) &= \{ E_{A, B} \suchthat (A, B) \in \Z^2, \, h^{\textrm{naive}}(E_{A, B}) \leq X \}, \\
\mathcal{D}^{\prime}(X) &= \{ E_{A, B} \in \mathcal{D}(X) \suchthat (A, B) \in \Z^2 \smallsetminus \{ (0, 0) \} \}, \\
\mathcal{M}(X) &= \{ E_{A, B} \in \mathcal{D}^{\prime}(X) \suchthat \text{there's no prime $p \in \Z$ such that $p^4 \mid A$ and $p^6 \mid B$} \}, \text{and}\\
\mathcal{S}(X) &= \{ E_{A, B} \in \mathcal{M}(X) \suchthat \text{$E_{A, B}$ is singular} \}.
\end{align*}
Note then that $\mathcal{E}(X) \subseteq \mathcal{M}(X) \subseteq \mathcal{D}^{\prime}(X) \subseteq \mathcal{D}(X)$ and moreover we have
$$
\mathcal{E}(X) = \{ E_{A, B} \in \mathcal{M}(X) \suchthat \Delta_{E_{A, B}} \neq 0 \}. 
$$
Next, for $d \in \Z$, we define $d * E_{A, B} := E_{d^4A, d^6 B}$. Observe in particular that from the definition of the naive height we have
$$
h^{\textrm{naive}}(d * E_{A, B}) = d^{12} \cdot h^{\textrm{naive}}(E_{A, B}).
$$
Therefore $h^{\textrm{naive}}(d * E_{A, B}) \leq X$ if and only if $h^{\textrm{naive}}(E_{A, B}) \leq X/d^{12}$. Using this, it follows that every curve in the family $\mathcal{D}^{\prime}(X)$ can be written uniquely as a twist $d * E_{A, B}$ for a unique curve $E_{A, B} \in \mathcal{M}(d^{-12} X)$. Therefore we have
\begin{align}\label{eqn:curve-families-decomposition}
\mathcal{D}^{\prime}(X) = \coprod_{d = 1}^{\lfloor X^{1/12} \rfloor} d * \mathcal{M}(d^{-12} X) \quad \text{and}\quad \mathcal{M}(X) = \mathcal{E}(X) \coprod \mathcal{S}(X).
\end{align}

We note that $h^{\textrm{naive}}(E_{A, B}) \leq X$ if and only if $|A| \leq \dfrac{X^{1/3}}{2^{2/3}}$ and $|B| \leq \dfrac{X^{1/2}}{3^{3/2}}$. Hence, the bound on the naive height $h^{\textrm{naive}}(E_{A, B}) \leq X$ is equivalent to the lattice point $(A, B) \in \Z^2$ lying on a rectangular box centered at the origin.

Now, for the singular curves in $\mathcal{M}(X)$ we claim that
\begin{align}\label{eq:singular-curves-bound}
\# \mathcal{S}(X) = O(X^{1/6}).
\end{align}
Indeed, a curve $E_{A,B}$ is singular if and only if $\Delta_{E_{A,B}}=-16(4A^3+27B^2) = 0$. Then, if $\Delta_{E_{A,B}} = 0$, the number $A$ must be negative and moreover, the real points $(a, b) \in \R^2$ such that $4a^3+27b^2 = 0$ can be parametrized by $a = -3w^2$ and $b = 2w^3$ for $w \in \R$. Then, using this we can easily see that we must have $|w| \leq \dfrac{X^{1/6}}{2^{1/3} 3^{1/2}}$ to ensure $h^{\textrm{naive}}(E_{A, B}) \leq X$ and that $w$ must belong to $\Z$ to have $(a,b) \in \Z \times \Z$. Then, counting the number of integers $w \in \Z$ satisfying the inequality $|w| \leq \dfrac{X^{1/6}}{2^{1/3} 3^{1/2}}$, we conclude that $\# \mathcal{S}(X) \leq 2\dfrac{X^{1/6}}{2^{1/3} 3^{1/2}} = \dfrac{2^{2/3}} {3^{1/2}} X^{1/6}$, which proves the claim \eqref{eq:singular-curves-bound}.










Next, observe that $\# \mathcal{D}^{\prime}(X)$ is equal to the number of lattice points $(A, B) \in \Z^2 \smallsetminus{\{ (0, 0) \}}$ within the box determined by the previous inequalities on $|A|$ and $|B|$. Thus we have
\begin{align}\label{eqn:D-prime-count}
\# \mathcal{D}^{\prime}(X) = \left( 2 \left \lfloor \frac{X^{1/3}}{2^{2/3}}  \right \rfloor + 1 \right) \left( 2 \left \lfloor \frac{X^{1/2}}{3^{3/2}}  \right \rfloor + 1 \right) - 1.
\end{align}

Now, a form of the Möbius inversion formula is the following. If $F(x)$ and $G(x)$ are complex valued functions defined in the interval $[1, \infty)$ such that
$$
G(x) = \sum_{1 \leq n \leq x} F \left( \frac{x}{n} \right)
$$
for every $x \geq 1$, then
$$
F(x) = \sum_{1 \leq n \leq x} \mu(n) G \left( \frac{x}{n} \right)
$$
for every $x \geq 1$. Therefore, from the decomposition \eqref{eqn:curve-families-decomposition} we have
\begin{align*}
\# \mathcal{D}^{\prime}(X) = \sum_{1 \leq d \leq X^{1/12}} \# (d * \mathcal{M}(d^{-12}X)) = \sum_{1 \leq d \leq X^{1/12}} \# \mathcal{M}(d^{-12}X),
\end{align*}
and using the Möbius inversion formula just cited, we get
\begin{align*}
    \#\mathcal{M}(X) &= \sum_{1 \leq d \leq X^{1/12}} \mu(d) \# \mathcal{D}^{\prime}(d^{-12}X) \\
    &= \sum_{1 \leq d \leq X^{1/12}} \mu(d) \left\{  \left( 2 \left \lfloor \frac{(d^{-12}X)^{1/3}}{2^{2/3}}  \right \rfloor + 1 \right) \left( 2 \left \lfloor \frac{(d^{-12}X)^{1/2}}{3^{3/2}}  \right \rfloor + 1 \right) - 1  \right\}\\
    &= \sum_{1 \leq d \leq X^{1/12}} \mu(d) \left\{  \left( 2 \left \lfloor \frac{X^{1/3}}{d^{4} 2^{2/3}}  \right \rfloor + 1 \right) \left( 2 \left \lfloor \frac{X^{1/2}}{d^{6} 3^{3/2}}  \right \rfloor + 1 \right) - 1  \right\}.
\end{align*}
Now, using the fact that $\lfloor X \rfloor = X + O(1)$ for every $X$, we get
\begin{align}\label{eq:M(X)-expression}
\#\mathcal{M}(X) &= \sum_{1 \leq d \leq X^{1/12}} \mu(d) \left\{  \left( 2  \frac{X^{1/3}}{d^{4} 2^{2/3}}  + O(1) \right) \left( 2 \frac{X^{1/2}}{d^{6} 3^{3/2}}  + O(1) \right) - 1  \right\} \notag \\
&= \sum_{1 \leq d \leq X^{1/12}} \mu(d) \left( 4  \frac{X^{5/6}}{d^{10} 2^{2/3} 3^{3/2}} + O(X^{1/2})\right) \notag \\
&= \left( \frac{2^{4/3}}{3^{3/2}} \sum_{1 \leq d \leq X^{1/12}} \frac{\mu(d)}{d^{10}} \right) X^{5/6} + \left( \sum_{1 \leq d \leq X^{1/12}} \mu(d) \right) O(X^{1/2}).
\end{align}

We will now give an asymptotic formula for the expression inside the first parentheses in \eqref{eq:M(X)-expression} and a bound for the expression inside the second parentheses. Thus, for the expression inside the first parentheses we have
\begin{align*}
    \frac{2^{4/3}}{3^{3/2}} \sum_{1 \leq d \leq X^{1/12}} \frac{\mu(d)}{d^{10}} &= \frac{2^{4/3}}{3^{3/2}} \left( \sum_{d=1}^{\infty} \frac{\mu(d)}{d^{10}} - \sum_{d >X^{1/12}} \frac{\mu(d)}{d^{10}}\right) \\ &= 
    \frac{2^{4/3}}{3^{3/2}\zeta(10)} - \frac{2^{4/3}}{3^{3/2}}\sum_{d >X^{1/12}} \frac{\mu(d)}{d^{10}},
\end{align*}
where we used the well known formula 
\begin{align*}
    \frac{1}{\zeta(s)} = \sum_{n = 1}^{\infty} \frac{\mu(n)}{n^s}
\end{align*}
for the reciprocal of the Riemann zeta function.

Next, using the asymptotic bound (see e.g. \cite[Theorem 3.2]{Apostol76})
\begin{align*}
    \sum_{n>x}\frac{1}{n^s} = O(x^{1-s}), \quad \text{for } s>1 \text{ and } x\geq 1,
\end{align*}
we see that
\begin{align*}
    \left|\sum_{d >X^{1/12}} \frac{\mu(d)}{d^{10}} \right| \leq \sum_{d >X^{1/12}} \frac{|\mu(d)|}{d^{10}} \leq \sum_{d >X^{1/12}} \frac{1}{d^{10}} = O(X^{-9/12}).
\end{align*}
Hence we have
\begin{align}\label{eq:M(X)-expression-first-term-asymptotic}
 \frac{2^{4/3}}{3^{3/2}} \sum_{1 \leq d \leq X^{1/12}} \frac{\mu(d)}{d^{10}} = \frac{2^{4/3}}{3^{3/2}\zeta(10)} + O(X^{-9/12}).
\end{align}

Now, for the expression inside the second parentheses in \eqref{eq:M(X)-expression} we have the trivial bound
\begin{align}\label{eq:mu-sum-trivial-bound}
    \sum_{1 \leq d \leq X^{1/12}} \mu(d) = O(X^{1/12}).
\end{align}
Therefore, substituting \eqref{eq:M(X)-expression-first-term-asymptotic} and \eqref{eq:mu-sum-trivial-bound} in \eqref{eq:M(X)-expression} we obtain
\begin{align}\label{eq:M(X)-asymptotic}
\#\mathcal{M}(X) &= \left( \frac{2^{4/3}}{3^{3/2}\zeta(10)} + O(X^{-9/12}) \right) X^{5/6} +  O(X^{1/12}) \cdot O(X^{1/2}) \notag\\
&= \frac{2^{4/3}}{3^{3/2}\zeta(10)} X^{5/6} + O(X^{7/12})
\end{align}
Finally, from \eqref{eqn:curve-families-decomposition} we have that 
\begin{align}
\#\mathcal{E}(X) = \#\mathcal{M}(X) - \#\mathcal{S}(X).
\end{align}
Consequently, combining this formula with the asymptotic formulas \eqref{eq:M(X)-asymptotic} and \eqref{eq:singular-curves-bound} we obtain
\begin{align*}
    \# \mathcal{E}(X) = \frac{2^{4/3}}{3^{3/2} \zeta(10)} X^{5/6} + O(X^{7/12}),
\end{align*}
which proves the theorem.
\end{proof}

\subsection{The number of curves with a given \texorpdfstring{$j$}{j}-invariant}

Now we are going to count how many curves $E_{A, B} \in \mathcal{E}(X)$ have a given $j$-invariant. From this we will derive the counts for CM elliptic curves as described in the introduction. As part of the proofs we will need to estimate how many integers in an interval are $k$-free. We recall that an integer $n$ is said to be $k$-free (for $k \geq 2$) if there is no prime $p \in \Z$ such that $p^k \mid n$, that is, if $n$ is not divisible by a perfect $k$-th power greater than 1. We will use the following well known result, which appears to have been first proved by Gegenbauer in \cite{Gege85}. A more modern reference is the book of Montgomery and Vaughan \cite[Proposition 2.2]{MV07}. 

\begin{proposition}\label{thm:density-k-free-integers}
For $X\in \R_{\geq 0}$ and $k \geq 2$ an integer, let $Q_k(X)$ denote the number of positive $k$-free integers $n$ such that $n \leq X$. Then
\begin{align*}
    Q_k(X) = \frac{1}{\zeta(k)}X + O(X^{1/k})
\end{align*}
as $X \to \infty$, where $\zeta(s)$ is the Riemann zeta function.
\end{proposition}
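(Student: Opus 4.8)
The plan is to write the indicator function of the $k$-free integers as a Dirichlet convolution and then interchange the order of summation, exactly in the spirit of the M\"obius inversion argument used above in the proof of Brumer's theorem. Since every positive integer $n$ factors uniquely as $n = a^k b$ with $b$ being $k$-free, one has the identity $\sum_{d^k \mid n} \mu(d) = 1$ if $n$ is $k$-free and $0$ otherwise. Summing this over $n \le X$ and swapping the two sums gives
$$
Q_k(X) = \sum_{n \le X} \sum_{d^k \mid n} \mu(d) = \sum_{1 \le d \le X^{1/k}} \mu(d) \left\lfloor \frac{X}{d^k} \right\rfloor,
$$
where the truncation $d \le X^{1/k}$ records the requirement $d^k \le n \le X$.

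Next I would replace $\lfloor X/d^k \rfloor$ by $X/d^k + O(1)$. This splits the sum as $X \sum_{d \le X^{1/k}} \mu(d)/d^k$ plus an error of size at most the number of terms, namely $O(X^{1/k})$. To handle the main term, complete the Dirichlet series using the identity $\sum_{d=1}^\infty \mu(d)/d^k = 1/\zeta(k)$:
$$
X \sum_{1 \le d \le X^{1/k}} \frac{\mu(d)}{d^k} = \frac{X}{\zeta(k)} - X \sum_{d > X^{1/k}} \frac{\mu(d)}{d^k},
$$
and bound the tail with the estimate $\sum_{n > x} n^{-s} = O(x^{1-s})$ for $s > 1$ (already quoted in the proof of Brumer's theorem), applied with $x = X^{1/k}$ and $s = k$, to obtain $X \cdot O(X^{(1-k)/k}) = O(X^{1/k})$. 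Combining the two contributions yields $Q_k(X) = X/\zeta(k) + O(X^{1/k})$, as claimed.

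I do not expect any real obstacle here: the computation is elementary and essentially a simpler version of the one already performed for $\#\mathcal{M}(X)$. The only points requiring a moment's care are the justification of the convolution identity for the $k$-free indicator function and the observation that truncating the sum at $d = \lfloor X^{1/k} \rfloor$ costs only $O(X^{1/k})$ — both through the range restriction in the double sum and through the tail of the completed Dirichlet series.
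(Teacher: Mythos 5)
Your argument is correct: the convolution identity $\sum_{d^k \mid n}\mu(d)$ detecting $k$-free $n$, the swap of summation giving $\sum_{d \le X^{1/k}}\mu(d)\lfloor X/d^k\rfloor$, and the two $O(X^{1/k})$ error contributions are all handled properly, and this is precisely the classical proof. Note that the paper does not prove this proposition at all, but simply cites it (Gegenbauer; Montgomery--Vaughan, Proposition 2.2), and your write-up is essentially the standard argument given in those references, in the same spirit as the M\"obius inversion computation the paper carries out for $\#\mathcal{M}(X)$.
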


Moreover, in our arguments we will also need the following lemma.

\begin{lemma}\label{lemma:technical_lemma} 
Let $a = \frac{p}{q} \in \Q^{\times}$ with $p, q \in \Z$ coprime integers and consider the cuspidal cubic $\mathcal{C}_a: y^2 = a x^3$. Moreover, let $\mathcal{C}_a(\Z)$ be the set of integral points on $\mathcal{C}_a$, that is, the set of points $(x_0, y_0) \in \Z^2$ such that $y_0^2 = a x_0^3$. Then for every $T > 0$ we have
\begin{align}\label{eq:integral-points-bound}
    \# \{ (x_0, y_0) \in \mathcal{C}_a(\Z) \suchthat |x_0| \leq T \}  \leq 2\sigma_0(q)\sqrt{|pq|} T^{1/2},
\end{align}
where $\sigma_0(q) := \sum \limits_{d \mid q} 1$ is the number of positive divisors of $q$. In other words, we have
\begin{align}\label{eq:bound_lemma_1}
    \# \{ (x_0, y_0) \in \mathcal{C}_a(\Z) \suchthat |x_0| \leq T \} = O(T^{1/2}).
\end{align}
\end{lemma}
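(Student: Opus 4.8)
The plan is to parametrize integral points on the cuspidal cubic $\mathcal{C}_a \colon y^2 = ax^3$ with $a = p/q$ in lowest terms, and then count lattice points on the resulting parametrizing curve. First I would rewrite the equation as $qy^2 = px^3$ and analyze what divisibility constraints a solution $(x_0, y_0) \in \Z^2$ must satisfy. Since $\gcd(p, q) = 1$, from $q y_0^2 = p x_0^3$ we see $q \mid x_0^3$ and $p \mid y_0^2$; more usefully, writing $x_0$ in terms of a parameter will reveal that $x_0$ is essentially determined (up to bounded ambiguity) by a perfect square. Concretely, I expect that every integral point arises in the form $x_0 = q u^2 v$ and $y_0 = p q u^3 v^2 \cdot(\text{something})$ for integers $u, v$ with $v$ ranging over divisors of a fixed quantity depending on $p$ and $q$ — the key point being that the "free" parameter $u$ enters $x_0$ quadratically.

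The key step is the substitution. Set $e = \gcd(x_0, y_0)$ is not quite the right move; instead, I would argue: from $qy_0^2 = px_0^3$ and coprimality of $p,q$, we get $x_0 = q t$ for some $t \in \Z$ (since $q \mid x_0^3$ and $q$ squarefree-part considerations — actually more carefully, $q \mid p x_0^3$ forces $q \mid x_0^3$, but to conclude $q \mid x_0$ one needs $q$ squarefree, which is not assumed). So the cleaner route: substitute $x_0 = q x_1$, obtaining $y_0^2 = p q^2 x_1^3$, hence $(qx_1) \mid y_0$ — wait, we get $q^2 \mid y_0^2$ only up to the square part. Let me instead just observe $y_0^2 = pq \cdot (q x_1)^3 / q = p q^2 x_1^3$, so $q \mid y_0$; write $y_0 = q y_1$, giving $y_1^2 = p q x_1^3$. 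Now $q \mid y_1^2$, and iterating this is the technical nuisance. The honest approach is: in $y_1^2 = pq\, x_1^3$, every prime dividing $pq$ divides the left side, so for the equation to have solutions with $x_1 \ne 0$, we can factor $x_1 = s^2 w$ with $w$ squarefree-ish, and then $pq\, x_1^3 = pq\, s^6 w^3$ must be a perfect square, forcing $pq\, w^3$, equivalently $pqw$, to be a perfect square. Since $w$ is (chosen) squarefree, $w$ must divide $pq$, so there are at most $\sigma_0(pq)$ choices of $w$; for each, $x_1 = s^2 w$ with $|x_1| \le T/|q|$, so $|s| \le \sqrt{T/(|q|w)} \le \sqrt{T}$. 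This gives at most $2\sqrt{T}$ values of $s$ per $w$, and the sign of $y_1$ is then determined (two choices), yielding a total of $O(\sigma_0(pq)\sqrt{T})$. Matching constants to get the clean bound $2\sigma_0(q)\sqrt{|pq|}\, T^{1/2}$ will require tracking the factor $|q|$ in the denominator of the $x_1$-range against the $\sqrt{|pq|}$ and using $\sigma_0(q) \le \sigma_0(pq)$ or the multiplicativity $\sigma_0(pq) \le \sigma_0(p)\sigma_0(q)$ appropriately; I'd be willing to prove the weaker but sufficient $O(T^{1/2})$ if the explicit constant proves fiddly, since only \eqref{eq:bound_lemma_1} is used downstream.

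The main obstacle is the bookkeeping around non-squarefree $p$ and $q$: the naive "$q \mid x_0$" step fails, so one must instead peel off square parts carefully, and the canonical factorization $x_1 = s^2 w$ needs to be set up so that the residual equation $pqw = \square$ genuinely pins $w$ down to a divisor of $pq$. Once that is in place, everything reduces to counting integers $s$ in an interval of length $O(T^{1/2})$, which is immediate. A cleaner alternative I would consider first: note $(x_0, y_0) \mapsto (x_0, y_0/x_0)$ is a bijection onto points $(x_0, m)$ with $m^2 = a x_0$ — but $y_0/x_0$ need not be an integer, so one works with $m = y_0/x_0 \in \Q$ and clears denominators, landing back at essentially the same analysis; I expect the $x_1 = s^2 w$ factorization to be the tidiest presentation.
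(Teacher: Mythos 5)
Your final counting idea (write the $x$-coordinate as a square times a squarefree factor, observe that the squarefree factor is pinned down to divisors of $pq$, and then count the $O(T^{1/2})$ possible square parts) is sound, but the reduction you feed into it is not, and you in fact flag the problem yourself before doing it anyway. From $q y_0^2 = p x_0^3$ with $\gcd(p,q)=1$ you may only conclude $q \mid x_0^3$, not $q \mid x_0$, so the substitution $x_0 = q x_1$, $y_0 = q y_1$ leading to $y_1^2 = pq\, x_1^3$ does not capture all integral points when $q$ is not squarefree. Concretely, for $a = 1/8$ (so $p=1$, $q=8$) the point $(x_0,y_0)=(2,1)$ lies on $y^2 = x^3/8$ but $8 \nmid 2$, so your derived equation misses it; your ``honest approach'' is then a correct count of the wrong set. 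The repair is easy and keeps your idea intact: apply the factorization $x_0 = s^2 w$ with $w$ squarefree directly to the original relation $q y_0^2 = p x_0^3$. For any prime $\ell \nmid pq$, comparing $\ell$-adic valuations gives $2 v_\ell(y_0) = 6 v_\ell(s) + 3 v_\ell(w)$, forcing $v_\ell(w)$ even and hence $v_\ell(w)=0$; thus $w$ divides the radical of $pq$, there are at most $O(\sigma_0(|pq|))$ choices of $w$, at most $2\sqrt{T}$ choices of $s$ from $s^2|w| \le T$, and $y_0$ is determined up to sign, giving $O(T^{1/2})$. This yields \eqref{eq:bound_lemma_1} (which is all that is used later), though not the specific constant $2\sigma_0(q)\sqrt{|pq|}$ of \eqref{eq:integral-points-bound} without extra bookkeeping, as you anticipate.

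For comparison, the paper avoids the divisibility issue entirely by parametrizing $\mathcal{C}_a$ rationally via projection from the cusp: every rational point is $\left(\frac{t^2}{a}, \frac{t^3}{a}\right)$ with $t = \frac{r_0}{s_0}$ in lowest terms, and the integrality of $x_0$ forces $s_0^2 \mid q$, so one simply counts numerators $r_0$ with $|r_0| \le \sqrt{|pq|}\,T^{1/2}$ for each of the $\sigma_0(q)$ admissible denominators. That route gives the explicit constant in \eqref{eq:integral-points-bound} with no case analysis on square parts; your (repaired) route is a genuinely different, equally elementary argument, but as submitted the step reducing to $y_1^2 = pq\,x_1^3$ is a genuine gap.
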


\begin{proof}
By projecting from the double point at the origin in the curve $\mathcal{C}_a$ onto the vertical line $x = 1$ as in Figure \ref{fig:cuspidal-cubic}, we obtain the rational parametrization $\left( x(t), y(t) \right) = \left( \dfrac{t^2}{a}, \dfrac{t^3}{a} \right)$ for $t \in \R$.

\begin{figure}[H]
\begin{tikzpicture}[scale=1]
\draw[->,thick] (-3,0) -- (3,0) node[right] {$x$};
\draw[->,thick] (0,-4) -- (0,4) node[above] {$y$};

\draw[red,very thick] plot[smooth,variable=\t,domain=-1.6:1.6] ({-pow(\t,2)},{pow(\t,3)}) node[right] {$\mathcal{C}_a \colon y^2 = ax^3$};
\draw[dashed, very thick] (1,-4) -- (1,4) node[above] {$x=1$};
\draw[cyan, very thick] plot[domain=-2.5:2] (\x,{1.4*\x}) node[right] {$y=tx$};
\filldraw[black] (1,1.4) circle [radius=0.07] node[right] {$(1,t)$};
\filldraw (0,0) circle [radius=0.07];
\filldraw (-1.96, -2.744) circle [radius=0.07] node[right] {$\left(\dfrac{t^2}{a}, \dfrac{t^3}{a} \right)$};
\end{tikzpicture} 
\caption{A rational parametrization of the cuspidal cubic $\mathcal{C}_a \colon y^2 = ax^3$. The graph shows a typical curve with $a < 0$.}
\label{fig:cuspidal-cubic}
\end{figure}
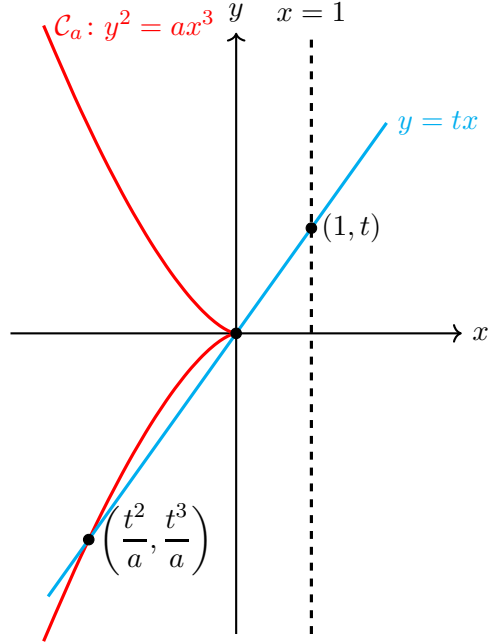

Indeed, note that when we substitute $y = tx$ into the equation for $\mathcal{C}_a$ we get
\begin{align*}
t^2 x^2 = ax^3 \iff x^2 (ax - t^2) = 0 \iff x = 0 \quad \text{or} \quad x = \dfrac{t^2}{a}.  
\end{align*}

This parametrization gives us a bijection

\begin{align}
     \varphi \colon &\R \longrightarrow \mathcal{C}_a \nonumber \\
     &t \longmapsto P_{t} := \left( \frac{t^2}{a}, \frac{t^3}{a} \right).
\end{align}

Moreover, note that restricting $\varphi$ to $\Q$ also gives us a bijection $\widetilde{\varphi} \colon \Q \longrightarrow C_a(\Q)$, where $\mathcal{C}_a(\Q)$ denotes the set of rational points on $\mathcal{C}_a$.

Suppose now that $(x_0, y_0) \in \mathcal{C}_a(\Z)$ is an integral point on $\mathcal{C}_a$ and let $t_0 \in \Q$ satisfy $\varphi(t_0) = (x_0, y_0)$. If $x_0 = 0$ then $t_0=0$, so we assume $x_0 \neq 0$. If we write $a = \dfrac{p}{q}$ and $t_0 = \dfrac{r_0}{s_0}$ with $p, q, r_0, s_0\in \Z$ such that $\op{gcd}(p,q)=1$ and $\op{gcd}(r_0, s_0) = 1$, then the relation $\varphi(t_0) = (x_0, y_0)$ implies that
\begin{align*}
\frac{q}{p} \cdot \frac{r_0^2}{s_0^2} = x_0 \quad \text{and} \quad \frac{q}{p} \cdot \frac{r_0^3}{s_0^3} = y_0,
\end{align*}
or equivalently, that
\begin{equation}\label{eq:cuspidal-diophantine}
\begin{aligned}
qr_0^2 &= x_0ps_0^2 \\
qr_0^3 &= y_0ps_0^3.
\end{aligned}
\end{equation}

Then, using the coprimality condition $\op{gcd}(r_0,s_0)=1$, the first equation in \eqref{eq:cuspidal-diophantine} implies that $s_0^2$ divides $q$ and hence $s_0$ divides $q$. Therefore we have

\begin{align}\label{eq:count_lemma_1}
    & \# \{ (x_0, y_0) \in \mathcal{C}_a(\Z) \suchthat |x_0| \leq T \} \nonumber \\
    & \leq 
    \# \left\{ t_0 = \frac{r_0}{s_0} \suchthat \text{$r_0, s_0 \in \Z$ coprime, with $s_0 \mid q$ and $|t_0| \leq |a|^{1/2} T^{1/2}$} \right\}.
\end{align}
Then, for each integer $s_0$ with $s_0|q$ we have that 
\begin{align*}
    |t_0| \leq |a|^{1/2} T^{1/2} \implies |r_0| \leq |s_0| |a|^{1/2} T^{1/2} \leq q |a|^{1/2} T^{1/2} = \sqrt{|pq|} T^{1/2}.
\end{align*}
Therefore, applying this to every positive divisor of $q$, we use \eqref{eq:count_lemma_1} to conclude that
\begin{align}
    & \# \{ (x_0, y_0) \in \mathcal{C}_a(\Z) \suchthat |x_0| \leq T \} \nonumber \\
    & \leq  \sigma_0(q) \cdot \#\left\{ r_0 \in \Z \setminus \{0\} \suchthat |r_0| \leq \sqrt{|pq|} T^{1/2} \right\} \leq 2\sigma_0(q)\sqrt{|pq|} T^{1/2}.
\end{align}
We then have proved that
\begin{align*}
    \# \{ (x_0, y_0) \in C_a(\Z) \suchthat |x_0| \leq T \} = O(T^{1/2}).
\end{align*}

\end{proof}

Now we recall the following notation from the introduction. For any $j \in \Q$, let
\begin{align*}
    \mathcal{E}_j := \{ E_{A,B} \in \mathcal{E} \suchthat j(E_{A,B}) = j\}.
\end{align*}
In other words, $\mathcal{E}_j$ is the subset of all elliptic curves in $\mathcal{E}$ with $j$-invariant equal to $j$. Moreover, let $\mathcal{J}^{\textrm{cm}}$ be the set of all the possible thirteen $j$-invariants of CM elliptic curves over $\Q$, as appearing in Table \ref{tab:fixed_cm_curves_table}. This set is given explicitly by

\begin{align*}
\mathcal{J}^{\textrm{cm}} = \{
& 0, 1728, -3375, 8000, -32768, 54000, 287496, -12288000,\\
& 16581375, -884736, -884736000, -147197952000, -262537412640768000\}.
\end{align*}

For a given $j \in \mathcal{J}^{\textrm{cm}}$, every elliptic curve $E_{A, B} \in \mathcal{E}_j$ has complex multiplication by the order corresponding to the given CM $j$-invariant. Hence, for a CM $j$-invariant $j \in \mathcal{J}^{\textrm{cm}}$ we have
$$
\mathcal{E}_j \subset \mathcal{E}^{\textrm{cm}}.
$$

We will also let $\mathcal{E}_j(X) := \mathcal{E}_j \cap \mathcal{E}(X)$. Then, the classification of CM elliptic curves over $\Q$ implies that we can decompose the families $\mathcal{E}^{\textrm{cm}}$ and $\mathcal{E}^{\textrm{cm}}(X)$ as disjoint unions given by
\begin{align}\label{eq:Ecm-decomposition}
    \mathcal{E}^{\mathrm{cm}} = \coprod_{j \in \mathcal{J}^{\textrm{cm}}} \mathcal{E}_j \quad \text{and} \quad \mathcal{E}^{\mathrm{cm}}(X) = \coprod_{j \in \mathcal{J}^{\textrm{cm}}} \mathcal{E}_j(X)
\end{align}
for every $X > 0$. Thus, we are now ready to prove Theorem \ref{thm:asymptotic-for-curves-with-given-j}, which we restate here for clarity.

\begin{theorem}\label{theo:size_of_cm_elliptic_curves_j_inv}
Let $j \in \Q$. Then
\begin{align}\label{eq:Ej-bound}
    \#\mathcal{E}_j(X) = O(X^{1/m(j)}),
\end{align}
where
\begin{align*}
    m(j) =
        \begin{cases}
            6 & \text{if } j\neq 0, 1728, \\
            3 & \text{if } j = 1728, \\
            2 & \text{if } j = 0.
        \end{cases}
\end{align*}
Moreover, for $j = 0$ and $j = 1728$ we have the asymptotic formulas
\begin{align}\label{eq:E-0-1728-asymptotics}
    \#\mathcal{E}_0(X) = \frac{2}{3^{3/2} \zeta(6)} X^{1/2} + O(X^{1/12})  \quad \text{and} \quad \#\mathcal{E}_{1728}(X) = \frac{2^{1/3}}{ \zeta(4)} X^{1/3} + O(X^{1/12})
\end{align}
as $X \to \infty$. In particular, these asymptotic formulas combined imply that the number of CM elliptic curves in $\mathcal{E}^{\mathrm{cm}}(X)$ satisfies the asymptotic formula
\begin{align}\label{eq:Ecm-asymptotics}
\#\mathcal{E}^{\mathrm{cm}}(X) = \frac{2}{3^{3/2} \zeta(6)} X^{1/2} + \frac{2^{1/3}}{ \zeta(4)} X^{1/3} + O(X^{1/6}),
\end{align}
as $X \to \infty$.
\end{theorem}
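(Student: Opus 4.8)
The starting point is the classical formula
$$
j(E_{A,B}) = 1728\cdot \frac{4A^3}{4A^3+27B^2},
$$
which shows that $j(E_{A,B})=0$ exactly when $A=0$, that $j(E_{A,B})=1728$ exactly when $B=0$, and that otherwise both $A$ and $B$ are nonzero. The plan is to treat these three cases separately and then reassemble using the decomposition \eqref{eq:Ecm-decomposition}. In the case $j=0$ a curve $E_{0,B}$ lies in $\mathcal{E}$ precisely when $B\neq 0$ and $B$ is $6$-free (the minimality condition "no prime $p$ with $p^4\mid A$, $p^6\mid B$" becomes "$p^6\nmid B$" since $A=0$), and the height bound $h^{\mathrm{naive}}(E_{0,B})=27|B|^2\le X$ is equivalent to $|B|\le X^{1/2}/3^{3/2}$; hence $\#\mathcal{E}_0(X)=2\,Q_6\!\big(X^{1/2}/3^{3/2}\big)$, and Proposition \ref{thm:density-k-free-integers} gives
$$
\#\mathcal{E}_0(X)=\frac{2}{3^{3/2}\zeta(6)}X^{1/2}+O(X^{1/12}).
$$
Symmetrically, for $j=1728$ a curve $E_{A,0}$ lies in $\mathcal{E}$ iff $A\neq 0$ and $A$ is $4$-free, and $h^{\mathrm{naive}}(E_{A,0})=4|A|^3\le X$ iff $|A|\le X^{1/3}/2^{2/3}$, so $\#\mathcal{E}_{1728}(X)=2\,Q_4\!\big(X^{1/3}/2^{2/3}\big)$, and Proposition \ref{thm:density-k-free-integers} together with $2/2^{2/3}=2^{1/3}$ yields
$$
\#\mathcal{E}_{1728}(X)=\frac{2^{1/3}}{\zeta(4)}X^{1/3}+O(X^{1/12}).
$$

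For the remaining case $j\neq 0,1728$, I would rewrite the $j$-invariant identity as $27jB^2=(6912-4j)A^3$, i.e.\ $B^2=a\,A^3$ with $a:=\dfrac{4(1728-j)}{27j}\in\Q^{\times}$. Thus every $(A,B)$ with $E_{A,B}\in\mathcal{E}_j$ gives an integral point on the cuspidal cubic $\mathcal{C}_a\colon y^2=ax^3$, and $h^{\mathrm{naive}}(E_{A,B})\le X$ forces $|A|\le X^{1/3}/2^{2/3}$. Applying Lemma \ref{lemma:technical_lemma} with $T=X^{1/3}/2^{2/3}$ gives $\#\mathcal{E}_j(X)=O(T^{1/2})=O(X^{1/6})$ (the extra minimality condition only shrinks the count, so the bound survives). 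This establishes $\#\mathcal{E}_j(X)=O(X^{1/m(j)})$ in all three cases, since the two explicit asymptotics above also provide the bounds $O(X^{1/2})$ and $O(X^{1/3})$.

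Finally, since $\mathcal{J}^{\mathrm{cm}}$ consists of $0$, $1728$, and eleven further values, the disjoint decomposition \eqref{eq:Ecm-decomposition} gives
$$
\#\mathcal{E}^{\mathrm{cm}}(X)=\#\mathcal{E}_0(X)+\#\mathcal{E}_{1728}(X)+\sum_{\substack{j\in\mathcal{J}^{\mathrm{cm}}\\ j\neq 0,1728}}\#\mathcal{E}_j(X)
=\frac{2}{3^{3/2}\zeta(6)}X^{1/2}+\frac{2^{1/3}}{\zeta(4)}X^{1/3}+O(X^{1/6}),
$$
where the $O(X^{1/12})$ errors from the first two terms are absorbed into the $O(X^{1/6})$ coming from the eleven remaining $j$-invariants. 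I do not expect a serious obstacle here: the two substantive inputs (Proposition \ref{thm:density-k-free-integers} and Lemma \ref{lemma:technical_lemma}) are already in hand, so the only care needed is the bookkeeping that translates the minimality condition on $(A,B)$ into the $k$-freeness conditions when $A=0$ or $B=0$, and the elementary algebra identifying the correct cuspidal cubic $\mathcal{C}_a$ when $j\neq 0,1728$.
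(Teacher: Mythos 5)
Your proposal is correct and takes essentially the same route as the paper's proof: the identical three-way case split, Proposition \ref{thm:density-k-free-integers} for the $6$-free and $4$-free counts when $j=0$ and $j=1728$, Lemma \ref{lemma:technical_lemma} applied with $T=X^{1/3}/2^{2/3}$ for $j\neq 0,1728$, and the disjoint decomposition over $\mathcal{J}^{\mathrm{cm}}$ to conclude. (Incidentally, your constant $a=\tfrac{4(1728-j)}{27j}$ is the correct one; the paper's written $\tfrac{1728-4j}{27j}$ is a harmless slip, and your simplification $2/2^{2/3}=2^{1/3}$ likewise fixes a typo in the paper's intermediate display.)
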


\begin{proof}
First, recall that for a curve $E_{A, B} \in \mathcal{E}$, the condition $h^{\mathrm{naive}}(E_{A, B}) \leq X$ is equivalent to the pair of inequalities $|A| \leq \dfrac{X^{1/3}}{2^{2/3}}$ and $|B| \leq \dfrac{X^{1/2}}{3^{3/2}}$. Additionally, the $j$-invariant of $E_{A, B}$ is given by 
$$
j(E_{A, B}) = 1728 \frac{4 A^3}{4A^3 + 27B^2},
$$
and moreover, $j(E_{A, B}) = 0$ if and only if $A = 0$, and $j(E_{A, B}) = 1728$ if and only if $B = 0$. Now, in order to count the number of curves in $\mathcal{E}_j(X)$ for a given $j \in \Q$, we must determine how many integral points $(A, B) \in \Z^2$ satisfy the following conditions:
\begin{enumerate}
    \item The point $(A, B)$ lies within the rectangular box $\mathcal{B}(X)$ determined by the inequalities 
    \begin{align}\label{eq:AB-bounds}
        |A| \leq \dfrac{X^{1/3}}{2^{2/3}} \quad \text{and} \quad |B| \leq \dfrac{X^{1/2}}{3^{3/2}}.
    \end{align}
    \item The discriminant $\Delta_{E_{A, B}} \neq 0$, meaning that the point $(A, B)$ satisfies that $4A^3 + 27B^2 \neq 0$. Equivalently, the point $(A, B)$ must not lie on the cuspidal cubic
    \begin{align}
        y^2 = -\frac{4}{27} x^3.
    \end{align}
    \item The $j$-invariant $j(E_{A, B}) = j$, that is, the point $(A, B)$ must satisfy the equation 
    \begin{align}
    1728 \frac{4 A^3}{4A^3 + 27B^2} = j.
    \end{align}
    \item There is no prime number $p \in \Z$ such that $p^4 \mid A$ and $p^6 \mid B$.
\end{enumerate}

We will now analyze three cases separately, based on whether $j = 0$, $j = 1728$ or $j \neq 0, 1728$.
\vspace{5pt}

$\bullet$ \noindent \textbf{Points $(A, B)$ with $j(E_{A, B}) = 0$.} In this case, we know that $A = 0$. Thus, the set $\mathcal{E}_0(X)$ is in one to one correspondence with the set of points $(0, B) \in \Z^2$ such that $|B| \leq \dfrac{X^{1/2}}{3^{3/2}}$, $B \neq 0$ and there is no prime $p \in \Z$ such that $p^6 \mid B$. In other words, we must count the number of integers $B \in \Z \smallsetminus \{ 0 \}$ in the range $|B| \leq \dfrac{X^{1/2}}{3^{3/2}}$ that are $6$-th power-free. Hence, by Proposition \ref{thm:density-k-free-integers} with $k = 6$, we have
\begin{align}
\mathcal{E}_0(X) = \frac{2}{3^{3/2}\zeta(6)} X^{1/2} + O(X^{1/12}).
\end{align}

$\bullet$ \noindent \textbf{Points $(A, B)$ with $j(E_{A, B}) = 1728$.} In this case, we know that $B = 0$. Thus, the set $\mathcal{E}_{1728}(X)$ is in one to one correspondence with the set of points $(A, 0) \in \Z^2$ such that $|A| \leq \dfrac{X^{1/3}}{2^{2/3}}$, $A \neq 0$ and there is no prime $p \in \Z$ such that $p^4 \mid A$. In other words, we must count the number of integers $A \in \Z \smallsetminus \{ 0 \}$ in the range $|A| \leq \dfrac{X^{1/3}}{2^{2/3}}$ that are $4$-th power-free. Hence, by Proposition \ref{thm:density-k-free-integers} with $k = 4$, we have
\begin{align}
\mathcal{E}_{1728}(X) = \frac{2}{2^{2/3}\zeta(4)} X^{1/3} + O(X^{1/12}) = \frac{2^{1/2}}{\zeta(4)} X^{1/3} + O(X^{1/12}).
\end{align}

$\bullet$ \noindent \textbf{Points $(A, B)$ with $j(E_{A, B}) \neq 0, 1728$.} In this case we have $A \neq 0$ and $B \neq 0$. Moreover, we can rewrite the condition (3) above as follows. Note that
\begin{align*}
j(E_{A, B}) = j \iff 1728 \frac{4A^3}{4A^3 + 27B^2} = j \iff B^2 = \frac{1728 - 4j}{27j} A^3.
\end{align*}
This means that the condition for $E_{A, B}$ having
$j$-invariant equal to $j$, for $j \neq 0,1728$, is equivalent to the point $(A, B) \in(\mathbb{Z} \smallsetminus \{0\}) \times(\mathbb{Z} \smallsetminus \{0\})$ lying on the cuspidal cubic
\begin{align}\label{eq:cuspidal-cubic}
y^2 = \frac{1728 - 4j}{27j} x^3.
\end{align}
and having $|A| \leq  \dfrac{X^{1/3}}{2^{2/3}}$, because of the bounds \eqref{eq:AB-bounds}. Using Lemma \ref{lemma:technical_lemma} with $a=a_j= \frac{1728 - 4j}{27j}$ and $T=\dfrac{X^{1/3}}{2^{2/3}}$, we conclude that the number of these points is $O(X^{1/6})$. In particular, since we didn't take into account the condition (4) about the prime divisors of $A$ and $B$, the number of points $(A, B)$ corresponding to curves $E_{A, B} \in \mathcal{E}_j(X)$ is also $O(X^{1/6})$.

This completes the proof of the asymptotic formulas \eqref{eq:Ej-bound} and \eqref{eq:E-0-1728-asymptotics}. Finally, using this in combination with the decomposition
\begin{align*}
\quad \mathcal{E}^{\mathrm{cm}}(X) = \coprod_{j \in \mathcal{J}^{\textrm{cm}}} \mathcal{E}_j(X),
\end{align*}
we see that 
\begin{align*}
\# \mathcal{E}^{\textrm{cm}}(X) &= \sum_{j \in \mathcal{J}^{\textrm{cm}}} \# \mathcal{E}_j(X)\\
&= \# \mathcal{E}_0(X) + \# \mathcal{E}_{1728}(X) + \sum_{\substack{j \in \mathcal{J}^{\textrm{cm}}\\j \neq 0, 1728}} \#\mathcal{E}_j(X)\\
&= \frac{2}{3^{3/2} \zeta(6)} X^{1/2} + \frac{2^{1/3}}{ \zeta(4)} X^{1/3} + O(X^{1/6}),
\end{align*}
and this finishes the proof of the theorem.
\end{proof}

As consequences of this theorem we prove Theorem \ref{thm:density-of-cm-elliptic-curves} and Theorem \ref{thm:density-j-0} next.

\begin{theorem}\label{theo:density_of_cm_elliptic_curves}
The natural density of the set of CM elliptic curves $\mathcal{E}^{\mathrm{cm}}$ in the family $\mathcal{E}$ is
\begin{align}
d(\mathcal{E}^{\mathrm{cm}}) = \lim_{X \to \infty} \frac{\#\mathcal{E}^{\textrm{cm}}(X)}{\# \mathcal{E}(X)} = 0.
\end{align}
\end{theorem}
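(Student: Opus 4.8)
The plan is to deduce this immediately from the two asymptotic formulas already in hand: Brumer's count for the full family and the count for the CM subfamily. First I would recall from Theorem~\ref{thm:brumer} (proved above under our height normalization) that
\begin{align*}
\#\mathcal{E}(X) = \frac{2^{4/3}}{3^{3/2}\zeta(10)} X^{5/6} + O(X^{7/12}),
\end{align*}
so in particular $\#\mathcal{E}(X) \sim \frac{2^{4/3}}{3^{3/2}\zeta(10)} X^{5/6}$ as $X \to \infty$, and this leading term is a positive constant times $X^{5/6}$, so $\#\mathcal{E}(X) \neq 0$ for $X$ large and we may legitimately divide by it.

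Next I would invoke the asymptotic formula \eqref{eq:Ecm-asymptotics} from Theorem~\ref{theo:size_of_cm_elliptic_curves_j_inv}, namely
\begin{align*}
\#\mathcal{E}^{\mathrm{cm}}(X) = \frac{2}{3^{3/2}\zeta(6)} X^{1/2} + \frac{2^{1/3}}{\zeta(4)} X^{1/3} + O(X^{1/6}) = O(X^{1/2}),
\end{align*}
since the dominant term is of order $X^{1/2}$. Forming the quotient, one then has
\begin{align*}
\frac{\#\mathcal{E}^{\mathrm{cm}}(X)}{\#\mathcal{E}(X)} = \frac{O(X^{1/2})}{\frac{2^{4/3}}{3^{3/2}\zeta(10)} X^{5/6} + O(X^{7/12})} = O\!\left(X^{1/2 - 5/6}\right) = O(X^{-1/3}),
\end{align*}
where in the middle step I would divide numerator and denominator by $X^{5/6}$ and note that the denominator tends to the positive constant $\frac{2^{4/3}}{3^{3/2}\zeta(10)}$. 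Letting $X \to \infty$ gives $d(\mathcal{E}^{\mathrm{cm}}) = \lim_{X\to\infty} \#\mathcal{E}^{\mathrm{cm}}(X)/\#\mathcal{E}(X) = 0$, and in particular the limit exists, so the natural density is well-defined and equal to zero.

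There is no real obstacle here: all the analytic work has already been carried out in establishing Theorems~\ref{thm:brumer} and \ref{theo:size_of_cm_elliptic_curves_j_inv}, and the present statement is just the observation that the CM count grows like $X^{1/2}$ while the total count grows like $X^{5/6}$, a genuine gap in exponents. The only point requiring a line of care is to justify dividing by $\#\mathcal{E}(X)$, which is handled by noting its leading coefficient is strictly positive. I would also remark that the same reasoning in fact shows $\#\mathcal{E}^{\mathrm{cm}}(X)/\#\mathcal{E}(X) \asymp X^{-1/3}$, giving the precise rate at which the density vanishes, which matches the numerical data in Table~\ref{table:natural-density-cm-elliptic-curves}.
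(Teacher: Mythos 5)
Your proposal is correct and follows essentially the same route as the paper: both deduce the result by dividing the asymptotic $\#\mathcal{E}^{\mathrm{cm}}(X) = \frac{2}{3^{3/2}\zeta(6)}X^{1/2} + \frac{2^{1/3}}{\zeta(4)}X^{1/3} + O(X^{1/6})$ by Brumer's formula $\#\mathcal{E}(X) = \frac{2^{4/3}}{3^{3/2}\zeta(10)}X^{5/6} + O(X^{7/12})$ and letting $X \to \infty$, the paper factoring out $X^{-1/3}$ exactly as you do implicitly. Your additional remark that the quotient is of order $X^{-1/3}$ is a harmless strengthening consistent with the paper's computation.
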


\begin{proof}
Using the asymptotic formulas \eqref{eqn:elliptic-count} and \eqref{eq:Ecm-asymptotics} we have
\begin{align*}
d(\mathcal{E}^{\mathrm{cm}}) &= \lim_{X\to \infty} \frac{\#\mathcal{E}^{\mathrm{cm}}(X)}{\#\mathcal{E}(X)} = \lim_{X \to \infty} \frac{ \frac{2}{3^{3/2} \zeta(6)} X^{1/2} + \frac{2^{1/3}}{ \zeta(4)} X^{1/3} + O(X^{1/6}) }{ \frac{2^{4/3}}{3^{3/2} \zeta(10)} X^{5/6} + O(X^{7/12}) }\\
&= \lim_{X\to \infty} \frac{1}{X^{1/3}} \frac{ \frac{2}{3^{3/2}\zeta(6)} + \frac{2^{1/3}}{\zeta(4)} X^{-1/6} + O(X^{-1/3}) }{ \frac{2^{4/3}}{3^{3/2} \zeta(10)} + O(X^{-1/4}) } = 0.
\end{align*}
\end{proof}

\begin{theorem}\label{theo:distribution_of_cm_elliptic_curves}
The elliptic curves with $j$-invariant 0 comprise $100\%$ of all the CM elliptic curves in $\mathcal{E}^{\textrm{cm}}$. More precisely, the natural density of the family $\mathcal{E}_0$ in $\mathcal{E}^{\mathrm{cm}}$ equals $1$, that is
\begin{align*}
d(\mathcal{E}_0) = \lim_{X\to \infty} \frac{\#\mathcal{E}_0(X)}{\#\mathcal{E}^{\mathrm{cm}}(X)} = 1.
\end{align*}
\end{theorem}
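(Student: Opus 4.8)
The plan is to deduce this directly from the asymptotic formulas \eqref{eq:E-0-1728-asymptotics} and \eqref{eq:Ecm-asymptotics} established in Theorem \ref{theo:size_of_cm_elliptic_curves_j_inv}. The conceptual heart of the matter is the observation that the leading term of $\#\mathcal{E}_0(X)$, namely $\frac{2}{3^{3/2}\zeta(6)} X^{1/2}$, is \emph{exactly} the leading term of $\#\mathcal{E}^{\mathrm{cm}}(X)$: the $j$-invariant $j = 1728$ contributes only a term of order $X^{1/3}$, and by the bound \eqref{eq:Ej-bound} each of the remaining eleven CM $j$-invariants contributes only $O(X^{1/6})$. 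Equivalently, all twelve CM families other than $\mathcal{E}_0$ have natural density $0$ inside $\mathcal{E}^{\mathrm{cm}}$.

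Concretely, I would substitute the two asymptotic formulas to write
\begin{align*}
\frac{\#\mathcal{E}_0(X)}{\#\mathcal{E}^{\mathrm{cm}}(X)} = \frac{\frac{2}{3^{3/2}\zeta(6)} X^{1/2} + O(X^{1/12})}{\frac{2}{3^{3/2}\zeta(6)} X^{1/2} + \frac{2^{1/3}}{\zeta(4)} X^{1/3} + O(X^{1/6})},
\end{align*}
and then divide numerator and denominator by $X^{1/2}$ to obtain
\begin{align*}
\frac{\#\mathcal{E}_0(X)}{\#\mathcal{E}^{\mathrm{cm}}(X)} = \frac{\frac{2}{3^{3/2}\zeta(6)} + O(X^{-5/12})}{\frac{2}{3^{3/2}\zeta(6)} + \frac{2^{1/3}}{\zeta(4)} X^{-1/6} + O(X^{-1/3})}.
\end{align*}
Letting $X \to \infty$, the two error terms and the $X^{-1/6}$ term all tend to $0$, so the quotient converges to $\dfrac{2/(3^{3/2}\zeta(6))}{2/(3^{3/2}\zeta(6))} = 1$, which is the claim.

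Since the argument reduces to a single limit computation once Theorem \ref{theo:size_of_cm_elliptic_curves_j_inv} is available, I do not expect any genuine obstacle here; the only point requiring mild care is the bookkeeping of error terms and the observation that the denominator is positive and bounded away from $0$ for all large $X$ (its leading term being a positive constant times $X^{1/2}$), which guarantees the quotient is well defined for large $X$ and justifies applying the quotient rule for limits.
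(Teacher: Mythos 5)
Your proposal is correct and follows exactly the paper's own argument: substitute the asymptotics for $\#\mathcal{E}_0(X)$ and $\#\mathcal{E}^{\mathrm{cm}}(X)$ from Theorem \ref{theo:size_of_cm_elliptic_curves_j_inv}, divide numerator and denominator by $X^{1/2}$, and pass to the limit. The additional remarks about the subdominant contributions of $j=1728$ and the other eleven CM $j$-invariants, and about the denominator being bounded away from zero, are just explicit bookkeeping of what the paper leaves implicit.
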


\begin{proof}
Using the asymptotic formulas from Theorem \ref{theo:size_of_cm_elliptic_curves_j_inv} we have
\begin{align*}
    d(\mathcal{E}_0) = \lim_{X\to \infty} \frac{\#\mathcal{E}_0(X)}{\#\mathcal{E}^{\mathrm{cm}}(X)} &=\lim_{X\to \infty} \frac{\frac{2}{3^{3/2} \zeta(6)} X^{1/2} + O(X^{1/12})}{\frac{2}{3^{3/2} \zeta(6)} X^{1/2} + \frac{2^{1/3}}{ \zeta(4)} X^{1/3} + O(X^{1/6})} \\
        &= \lim_{X\to \infty} \frac{\frac{2}{3^{3/2} \zeta(6)} + O(X^{-5/12})}{\frac{2}{3^{3/2} \zeta(6)} + \frac{2^{1/3}}{ \zeta(4)} X^{-1/6} + O(X^{-1/3})}  =  1.
\end{align*}
\end{proof}

\section{Isomorphism classes of elliptic curves and twisting}\label{section:twists}

In this section, we revisit fundamental definitions and results regarding twists of elliptic curves, as outlined in \cite[Chapter X]{Sil09}. This provides an alternative method of indexing the $K$-isomorphism classes of elliptic curves over a perfect field $K$. Subsequently, we employ this approach to enumerate CM elliptic curves over $\Q$ in a different way than the one used in Section \ref{section:distribution-of-cm-elliptic-curves-over-E} and we study their distribution in Section \ref{section:distribution-of-cm-elliptic-curves-using-twists}.

First, if $C/K$ is a smooth projective curve over a perfect field $K$, then the group of all $\widebar{K}$-isomorphisms $C \longrightarrow C$, called the \textit{isomorphism group} of $C$, is denoted by $\operatorname{Isom}(C)$. Then, a \textit{twist} of $C/K$ is a smooth curve $C^\prime/K$ that is isomorphic to $C$ over $\widebar{K}$. Moreover, two twists are treated as \textit{equivalent} if they are isomorphic over $K$, and the set of all twists of $C/K$, up to $K$-isomorphism, is denoted by $\operatorname{Twist}(C/K)$.

Now we briefly explain how $\operatorname{Twist}(C/K)$ can be identified with a particular pointed cohomology set. Given a twist $C^{\prime}/K$ and an isomorphism $\phi \colon C^{\prime} \longrightarrow C$ defined over $\widebar{K}$, let $\xi \colon G_{\widebar{K}/K} \longrightarrow \operatorname{Isom}(C)$ be given by $\xi(\sigma) = \xi_{\sigma} := \phi^{\sigma} \phi^{-1}$, for every $\sigma \in G_{\widebar{K}/K}$. It turns out that the map $\xi$ satisfies the relations defining a 1-cocycle, that is, we have
$$
\xi_{\sigma \tau}=\left(\xi_\sigma\right)^\tau \xi_\tau \quad \text { for all } \sigma, \tau \in G_{\widebar{K} / K} .
$$
In particular, we denote by $\{ \xi \}$ the associated cohomology class in $H^1(G_{\widebar{K} / K}, \operatorname{Isom}(C) )$. Moreover, it is known that the cohomology class $\{ \xi \}$ is completely determined by the $K$-isomorphism class of $C^{\prime}/K$ and that it is independent of the choice of isomorphism $\phi$. Therefore, this allows one to define a map
\begin{align}\label{eqn:twist-cohomology-bijection}
\operatorname{Twist}(C/K) &\longrightarrow H^1(G_{\widebar{K} / K}, \operatorname{Isom}(C) )\\
C^{\prime}/K &\longmapsto \{ \xi \}.\notag
\end{align}
Furthermore, this map is a bijection. These facts can be found in \cite[Theorem X.2.2]{Sil09}.

Now, if $E/K$ is an elliptic curve then the automorphism group of $E$, denoted by $\operatorname{Aut}(E)$, is the subgroup of $\operatorname{Isom}(E)$ consisting of all isomorphisms $E \longrightarrow E$ that also map the origin $O \in E$ to itself. Then, the inclusion $\operatorname{Aut}(E) \subset \operatorname{Isom}(E)$ induces an inclusion
$$
H^1 (G_{\widebar{K} / K}, \operatorname{Aut}(E) ) \subset H^1 (G_{\widebar{K} / K}, \operatorname{Isom}(E) ) .
$$
By the bijection \eqref{eqn:twist-cohomology-bijection}, the pointed cohomology set $H^1 (G_{\widebar{K} / K}, \operatorname{Isom}(E) )$ is identified with $\operatorname{Twist}(E/K)$ and hence we denote the cohomology group $H^1 (G_{\widebar{K} / K}, \operatorname{Aut}(E) )$ by $\operatorname{Twist}((E, O)/K)$. Also, it is known that if $C/K \in \operatorname{Twist}((E, O)/K)$, then $C/K$ can be given the structure of an elliptic curve over $K$ and conversely, if $E^{\prime} / K$ is an elliptic curve that is isomorphic to $E$ over $K$, then $E^{\prime} / K$ represents an element of $\operatorname{Twist}((E, O) / K)$. These facts can be found in \cite[Proposition X.5.3]{Sil09}.

Furthermore, if $\operatorname{char}(K) \neq 2,3$, then the elements of the group $\operatorname{Twist}((E, O) / K)$ can be explicitly described, as the two following results show. These are stated verbatim from \cite{Sil09}.

\begin{proposition}[{\cite[Proposition X.5.4]{Sil09}}]\label{prop:twist-equations}
Assume that $\operatorname{char}(K) \neq 2,3$, and let
$$
n = 
\begin{cases}
2 & \text { if } j(E) \neq 0,1728, \\ 
4 & \text { if } j(E) = 1728, \\ 
6 & \text { if } j(E) = 0.
\end{cases}
$$

Then $\operatorname{Twist}((E, O) / K)$ is canonically isomorphic to $K^\times /\left(K^\times\right)^n$.
More precisely, choose a Weierstrass equation
$$
E: y^2=x^3+A x+B
$$
for $E / K$, and let $D \in K^\times$. Then the elliptic curve $E_D \in \operatorname{Twist}((E, O) / K)$ corresponding to $D\left(\bmod \left(K^\times\right)^n\right)$ has Weierstrass equation
\begin{itemize}
\item[(i)] $E_D: y^2=x^3+D^2 A x+D^3 B \quad$ if \quad $j(E) \neq 0,1728$,
\item[(ii)] $E_D: y^2=x^3+D A x$ \quad if \quad $j(E)=1728$ (so $B = 0$),
\item[(iii)] $E_D: y^2=x^3+D B$ \quad if \quad $j(E) = 0$ (so $A = 0$).
\end{itemize}
\end{proposition}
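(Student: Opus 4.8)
This is \cite[Proposition X.5.4]{Sil09}; here is how the proof goes. The plan is to obtain the stated description in three stages: first identify the automorphism group $\operatorname{Aut}(E)$ as a $G_{\widebar{K}/K}$-module, then feed this into Kummer theory, and finally make the resulting correspondence explicit at the level of Weierstrass equations. Throughout I use the identification $\operatorname{Twist}((E,O)/K)\cong H^1(G_{\widebar{K}/K},\operatorname{Aut}(E))$ recorded above; once Step~1 shows $\operatorname{Aut}(E)$ is abelian, both sides are groups and the goal is a group isomorphism.

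\emph{Step 1: the automorphism group.} Since $\operatorname{char}(K)\neq 2,3$ we may put $E$ in short Weierstrass form $y^2=x^3+Ax+B$. Every element of $\operatorname{Aut}(E)$ then has the form $[u]\colon(x,y)\mapsto(u^2x,u^3y)$ with $u\in\widebar{K}^{\times}$, and $[u]$ preserves this equation exactly when $u^4A=A$ and $u^6B=B$. Splitting into three cases according to which of $A,B$ vanish — equivalently, according to whether $j(E)$ is $\neq 0,1728$, is $1728$, or is $0$ — one reads off that $u$ is forced to lie in $\mu_2$, $\mu_4$, or $\mu_6$ respectively, and conversely each such $u$ gives an automorphism. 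Hence $\operatorname{Aut}(E)=\{[u]:u\in\mu_n\}$ with $n$ as in the statement. As our identification $\operatorname{Aut}(E)\cong\mu_n$ we take the canonical $G_{\widebar{K}/K}$-equivariant one sending $[u]$ to the scalar $u^{-1}$ by which it acts on the invariant differential $\omega=dx/y$ (note $[u]^{*}\omega=u^{-1}\omega$). In particular $\operatorname{Aut}(E)$ is abelian.

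\emph{Steps 2 and 3: Kummer theory and the explicit equations.} Taking Galois cohomology of the Kummer sequence $1\to\mu_n\to\widebar{K}^{\times}\to\widebar{K}^{\times}\to 1$ (the last arrow being $z\mapsto z^{n}$) and using Hilbert's Theorem 90, the connecting homomorphism yields the canonical isomorphism $K^{\times}/(K^{\times})^{n}\cong H^1(G_{\widebar{K}/K},\mu_n)$ that sends $D$ to the class of the cocycle $\sigma\mapsto\sigma(\sqrt[n]{D})/\sqrt[n]{D}$. Composing with Step~1 gives $K^{\times}/(K^{\times})^{n}\cong\operatorname{Twist}((E,O)/K)$, and it remains only to check that this composite sends the class of $D$ to the curve $E_D$ written down in the statement. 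To that end, fix $D\in K^{\times}$ and $u\in\widebar{K}$ with $u^{n}=D$, and consider the map $\phi_D\colon E_D\to E$, $(x,y)\mapsto(u^{-2}x,u^{-3}y)$. A direct substitution — using $u^{n}=D$, which in the case $n=2$ also gives $u^{4}=D^{2}$ and $u^{6}=D^{3}$ — shows that $\phi_D$ is an isomorphism defined over $K(u)=K(\sqrt[n]{D})$; in particular $E_D$ is a twist of $(E,O)$. Its cocycle is $\xi_\sigma=\phi_D^{\sigma}\circ\phi_D^{-1}$, and since $\phi_D$ depends only on $u$ while $E$ and $E_D$ are defined over $K$, applying $\sigma$ replaces $u$ by $\sigma(u)=\zeta_\sigma u$ with $\zeta_\sigma:=\sigma(u)/u\in\mu_n$; a one-line computation then gives $\xi_\sigma=[\zeta_\sigma^{-1}]$. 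Under the identification of Step~1 this corresponds to $(\zeta_\sigma^{-1})^{-1}=\zeta_\sigma=\sigma(\sqrt[n]{D})/\sqrt[n]{D}$, i.e. to the Kummer cocycle of $D$, so $E_D$ corresponds to the class of $D$. Since every class of $H^1(G_{\widebar{K}/K},\mu_n)$ arises in this way, $D\mapsto E_D$ descends to the desired group isomorphism $K^{\times}/(K^{\times})^{n}\cong\operatorname{Twist}((E,O)/K)$.

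\emph{Main obstacle.} The load-bearing step is Step~1 — establishing that $\operatorname{Aut}(E)$, together with its Galois action, is exactly $\mu_n$. This is precisely where the hypothesis $\operatorname{char}(K)\neq 2,3$ is used in an essential way: in characteristics $2$ and $3$ the automorphism group is larger and noncommutative, so the identification with $\mu_n$, and with it the clean Kummer-theoretic conclusion, breaks down. The only other point requiring care is the bookkeeping in Step~3 — keeping the two normalizations (of $\operatorname{Aut}(E)\cong\mu_n$ via the invariant differential, and of the Kummer isomorphism) mutually consistent so that the cocycle $[\zeta_\sigma^{-1}]$ of $E_D$ is correctly recognized as the class of $D$ rather than $D^{-1}$. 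Once this is pinned down, checking in each of the three cases that the displayed $E_D$ has the stated $j$-invariant is routine.
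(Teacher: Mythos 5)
Your proposal is correct: the identification of $\operatorname{Aut}(E)$ with $\mu_n$ (using $\operatorname{char}(K)\neq 2,3$), the Kummer/Hilbert~90 isomorphism $H^1(G_{\widebar{K}/K},\mu_n)\cong K^{\times}/(K^{\times})^{n}$, and the explicit cocycle computation for $\phi_D\colon (x,y)\mapsto(u^{-2}x,u^{-3}y)$ all check out, including the normalization bookkeeping. The paper itself gives no proof of this statement — it quotes it verbatim from Silverman \cite[Proposition X.5.4]{Sil09} — and your argument is essentially the standard proof found there, so there is nothing to reconcile.
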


\begin{corollary}[{\cite[Corollary X.5.4.1]{Sil09}}]\label{cor:corollary_silverman}
Define an equivalence relation on the set $K \times K^\times$ by
$$
(j, D) \sim\left(j^{\prime}, D^{\prime}\right) \quad \text { if } \quad j=j^{\prime} \quad \text { and } \quad D / D^{\prime} \in\left(K^\times\right)^{n(j)},
$$
where $n(j)=2$ (respectively $4$, respectively $6$) if $j \neq 0,1728$ (respectively $j = 1728$, respectively $j = 0$). Then the $K$-isomorphism classes of elliptic curves $E / K$ are in one-to-one correspondence with the elements of the quotient
$$
\frac{K \times K^\times}{\sim} .
$$
\end{corollary}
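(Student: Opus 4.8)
The statement is \cite[Corollary X.5.4.1]{Sil09}, and the plan is to deduce it by combining the classification of elliptic curves over $\widebar{K}$ by the $j$-invariant with the description of $\operatorname{Twist}((E,O)/K)$ provided by Proposition \ref{prop:twist-equations}. First I would fix, once and for all and for every $j \in K$, an elliptic curve $E^{(j)}/K$ with $j(E^{(j)}) = j$; such a curve exists because $\operatorname{char}(K) \neq 2, 3$ (one may take the standard models $y^2 = x^3 + 1$ for $j = 0$, $y^2 = x^3 + x$ for $j = 1728$, and $y^2 = x^3 + \frac{3j}{1728 - j}\, x + \frac{2j}{1728 - j}$ for $j \neq 0, 1728$, the last being easily checked to have $j$-invariant $j$ from the formula $j(E_{A,B}) = 1728\cdot\frac{4A^3}{4A^3+27B^2}$). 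Using this choice, define
\[
\Phi \colon \frac{K \times K^\times}{\sim} \; \longrightarrow \; \bigl\{\, K\text{-isomorphism classes of elliptic curves over } K \,\bigr\}, \qquad (j, D) \longmapsto \bigl[\, E^{(j)}_D \,\bigr],
\]
where $E^{(j)}_D$ is the elliptic curve attached to $E^{(j)}$ and $D$ by the Weierstrass equations (i)--(iii) of Proposition \ref{prop:twist-equations}. The goal is to show that $\Phi$ is a well-defined bijection.

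The key fiberwise statement is the following. For a fixed $j$, the elliptic curves $E/K$ with $j(E) = j$ are exactly those that become isomorphic to $E^{(j)}$ over $\widebar{K}$, since two elliptic curves over $\widebar{K}$ are isomorphic if and only if they have the same $j$-invariant (see e.g. \cite[Proposition III.1.4]{Sil09}); by \cite[Proposition X.5.3]{Sil09} these are precisely the curves representing elements of $\operatorname{Twist}((E^{(j)}, O)/K)$. Proposition \ref{prop:twist-equations} then identifies $\operatorname{Twist}((E^{(j)}, O)/K)$ with $K^\times/(K^\times)^{n(j)}$ via $D \bmod (K^\times)^{n(j)} \mapsto E^{(j)}_D$. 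Combining these, for each fixed $j$ the assignment $D \mapsto \bigl[E^{(j)}_D\bigr]$ descends to a bijection between $K^\times/(K^\times)^{n(j)}$ and the set of $K$-isomorphism classes of elliptic curves with $j$-invariant equal to $j$. This fiberwise statement carries all the real content of the corollary, and it is exactly the input we are permitted to assume (Proposition \ref{prop:twist-equations} together with \cite[Propositions X.2.2 and X.5.3]{Sil09}).

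It then remains only to assemble these fiberwise bijections into the single map $\Phi$. For well-definedness, if $(j, D) \sim (j, D')$ then $D/D' \in (K^\times)^{n(j)}$, so $E^{(j)}_D$ and $E^{(j)}_{D'}$ represent the same element of $\operatorname{Twist}((E^{(j)}, O)/K)$ and are therefore $K$-isomorphic. For surjectivity, given an elliptic curve $E/K$ we set $j = j(E) \in K$; then $E$ is a twist of $E^{(j)}$, so $E$ is $K$-isomorphic to $E^{(j)}_D$ for some $D \in K^\times$, and hence $[E] = \Phi(j, D)$. For injectivity, if $\Phi(j, D) = \Phi(j', D')$ then $E^{(j)}_D$ and $E^{(j')}_{D'}$ are $K$-isomorphic, hence $\widebar{K}$-isomorphic, which forces $j = j'$; and for a fixed $j$ the equality $\Phi(j, D) = \Phi(j, D')$ gives $D/D' \in (K^\times)^{n(j)}$ by the injectivity of the fiberwise bijection above, i.e. $(j, D) \sim (j, D')$. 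I expect the only place requiring any care to be this final bookkeeping — confirming that fibering the set of all $K$-isomorphism classes over the invariant $j \in K$ matches the $\sim$-quotient without merging or omitting classes — and it reduces entirely to the three observations just made; there is no genuine obstacle of an analytic or computational nature.
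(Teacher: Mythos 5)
Your argument is correct: the paper itself offers no proof of this corollary (it is quoted verbatim from \cite[Corollary X.5.4.1]{Sil09}), and your deduction — fibering the $K$-isomorphism classes over the $j$-invariant, using that the $j$-invariant classifies curves over $\widebar{K}$ together with \cite[Proposition X.5.3]{Sil09}, and then applying Proposition \ref{prop:twist-equations} fiberwise to identify the classes with a fixed $j$ with $K^\times/(K^\times)^{n(j)}$ — is essentially the same argument Silverman gives. No gaps; the fixed models $E^{(j)}$ you choose do have $j$-invariant $j$, and the well-definedness, surjectivity, and injectivity checks are exactly the needed bookkeeping.
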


We will apply these two results when $K = \Q$. In this case we note the following.

\begin{lemma}\label{lem:quotient_to_integers_lemma}
Let $n \in \Z_{\geq 1}$ be an even integer and let $r \in \Q^{\times}$. Then there exists a unique $n$-th power-free integer $z_r \in \Z^{\times}$ such that $r \cdot (\Q^{\times})^n = z_r \cdot (\Q^{\times})^n$. In other words, every coset in the quotient group $\Q^{\times}/(\Q^{\times})^n$ contains a unique representative that is an $n$-th power-free integer. Thus, there is a bijection
\begin{align}\label{eqn:n-th-power-quotient-bijection}
\Q^{\times}/(\Q^{\times})^n &\longrightarrow \Z^{\times}/(\Q^{\times})^n\\
r \cdot (\Q^{\times})^n &\longmapsto z_r \cdot (\Q^{\times})^n .\notag
\end{align}
Here we denote $\Z^{\times} := \Z \setminus \{0\}$.
\end{lemma}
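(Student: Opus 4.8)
The plan is to show that every rational number $r \in \Q^{\times}$ can be adjusted by an $n$-th power to obtain an $n$-th power-free integer, and that this integer is uniquely determined. First I would establish existence. Write the prime factorization $r = \pm \prod_{p} p^{a_p}$ with $a_p \in \Z$ (all but finitely many zero). For each prime $p$, choose $b_p \in \Z$ with $b_p \equiv a_p \pmod{n}$ and $0 \leq b_p < n$; then $p^{b_p - a_p} \in (\Q^{\times})^n$ is an $n$-th power because $n \mid (b_p - a_p)$. Since $n$ is even, $-1 \cdot (-1)^{?}$ can be absorbed as well: more precisely, if $r < 0$ we may use the sign freely, but since we only quotient by $(\Q^{\times})^n$ and $n$ is even, $(-1)^n = 1$, so $-1 \notin (\Q^{\times})^n$ unless $-1$ is itself an $n$-th power in $\Q$, which it is not; hence the sign of the representative is determined and we simply keep whatever sign $r$ has. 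Setting $z_r := \operatorname{sign}(r) \cdot \prod_p p^{b_p}$, we get $z_r \in \Z^{\times}$, $z_r$ is $n$-th power-free (each exponent $b_p$ lies in $[0, n)$), and $r / z_r = \prod_p p^{a_p - b_p} \in (\Q^{\times})^n$, so $r \cdot (\Q^{\times})^n = z_r \cdot (\Q^{\times})^n$.

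Next I would prove uniqueness. Suppose $z, z' \in \Z^{\times}$ are both $n$-th power-free and $z \cdot (\Q^{\times})^n = z' \cdot (\Q^{\times})^n$, so $z / z' = t^n$ for some $t \in \Q^{\times}$. Comparing $p$-adic valuations for each prime $p$ gives $v_p(z) - v_p(z') = n \cdot v_p(t)$, so $v_p(z) \equiv v_p(z') \pmod n$; since $z, z'$ are $n$-th power-free we have $0 \leq v_p(z), v_p(z') < n$, forcing $v_p(z) = v_p(z')$ for every $p$. Thus $z$ and $z'$ have the same prime factorization up to sign, so $z = \pm z'$; and since $z/z' = t^n$ with $n$ even must be positive (as $t^n \geq 0$ and $t \neq 0$), we conclude $z/z' > 0$, hence $z = z'$. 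This establishes that the map in \eqref{eqn:n-th-power-quotient-bijection} is well-defined on cosets (the representative $z_r$ depends only on the coset, by the same valuation argument applied to $r$ and $r'$ in the same coset) and is a bijection: surjectivity is clear since every $n$-th power-free $z \in \Z^{\times}$ is its own representative, and injectivity is exactly the uniqueness just shown.

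I do not anticipate a serious obstacle here; this is an elementary unique-factorization argument. The only point requiring a moment of care is the handling of signs, where the hypothesis that $n$ is even is used: it guarantees $(-1)^n = 1$ and that $-1$ is not an $n$-th power in $\Q^{\times}$, so that the sign of the $n$-th power-free integer representative is unambiguously inherited from the sign of $r$. (If $n$ were odd, $-1$ would be an $n$-th power and one could always take the representative positive; the lemma as stated is tailored to the even case, which is exactly what is needed for the application via Proposition \ref{prop:twist-equations}, where $n \in \{2,4,6\}$.)
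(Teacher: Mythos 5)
Your proof is correct and follows essentially the same elementary unique-factorization argument as the paper: the paper gets existence by clearing denominators ($r=a/b \mapsto ab^{n-1}$) and stripping the largest $n$-th power, while you reduce each prime exponent mod $n$ directly, and both uniqueness arguments come down to comparing prime exponents of two $n$-th power-free integers differing by an $n$-th power, with the evenness of $n$ fixing the sign. No gaps.
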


\begin{proof}
Let $r = a/b \in \Q^{\times}$ for some $a, b \in \Z^{\times}$ with $a$ and $b$ both relatively prime and $n$-th power-free, that is, both are not divisible by the $n$-th power of some integer greater than $1$. We can safely assume this, as in the case where either $a$ or $b$ is not $n$-th power free, the largest perfect $n$-th powers dividing them would be absorbed within the quotient $\Q^{\times}/(\Q^{\times})^n$.

Then we have
$$
r \cdot (\Q^{\times})^n = \frac{a}{b} \cdot (\Q^{\times})^n = \frac{a b^{n-1}}{b^n} \cdot (\Q^{\times})^n = ab^{n - 1} \cdot (\Q^{\times})^n.
$$
Now, we have $ab^{n-1} \in \Z^{\times}$ but it may not be $n$-th power-free. Thus we remove the largest $n$-th power $c^n$ dividing it and write $ab^{n-1} = z_r \cdot c^n$ with $z_r \in \Z^{\times}$ and $n$-th power-free. Then clearly 
$$
r \cdot (\Q^{\times})^n = z_r \cdot (\Q^{\times})^n.
$$
To prove uniqueness, suppose that $w_r \in \Z^{\times}$ is $n$-th power-free and that $z_r \cdot (\Q^{\times})^n = w_r \cdot (\Q^{\times})^n.$ Thus there are relatively prime integers $d, e \in \Z^{\times}$ such that
$$
z_r = w_r \cdot \left(\frac{d}{e}\right)^n = w_r \cdot \frac{d^n}{e^n}.
$$
Hence $z_r e^n = w_r d^n$. This implies that $e^n$ divides $w_r$ and similarly that $d^n$ divides $z_r$ because $\gcd(d^n, e^n) = 1$ since $\gcd(d, e) = 1$. Given that both $z_r$ and $w_r$ are $n$-th power-free and that $n$ is even, this implies that $d^n = e^n = 1$ and therefore $z_r = w_r$.
\end{proof}

The following result shows how the naive height behaves after twisting.

\begin{lemma}\label{lem:naive_height_twist}
Let $E \colon y^2 = x^3 + Ax + B$ be an elliptic curve over $\Q$ with $A, B \in \Z$ and suppose that $D \in \Z^{\times} = \Z \smallsetminus \{ 0 \}$ is a nonzero integer. Also, let $n = n(E) \in \{ 2, 4, 6\}$, according to the value of $j(E)$, as defined in Proposition \ref{prop:twist-equations}. Then, if $E_D$ denotes the twist of $E/\Q$ corresponding to the coset $D \pmod{(\Q^{\times})^n}$ as in Proposition \ref{prop:twist-equations}, the naive height of $E_D$ satisfies
$$
h^{\textrm{naive}}(E_D) =|D|^{m} h^{\textrm{naive}}(E),
$$ 
where $m$ is given by
\begin{align}
m = 
\begin{cases}
6 &\text{if $j(E) \neq 0, 1728$},\\
3 &\text{if $j(E) = 1728$},\\
2 &\text{if $j(E) = 0$}.
\end{cases}
\end{align}
\end{lemma}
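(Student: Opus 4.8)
The plan is to handle each of the three cases $j(E) \neq 0, 1728$, $j(E) = 1728$, and $j(E) = 0$ separately, since in each case Proposition \ref{prop:twist-equations} gives an explicit Weierstrass equation for the twist $E_D$ from which the naive height can be read off directly. The computation is elementary in each case: one simply applies the definition $h^{\textrm{naive}}(E_{A,B}) = \max\{4|A|^3, 27|B|^2\}$ to the coefficients of $E_D$ and factors out the appropriate power of $|D|$.

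\begin{proof}
We treat the three cases according to the value of $j(E)$.

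\medskip
\noindent\textbf{Case $j(E) \neq 0, 1728$.} By Proposition \ref{prop:twist-equations}(i), the twist $E_D$ has Weierstrass equation $y^2 = x^3 + D^2 A x + D^3 B$. Hence
\begin{align*}
h^{\textrm{naive}}(E_D) &= \max\{ 4|D^2 A|^3, 27 |D^3 B|^2 \} = \max\{ 4|D|^6 |A|^3, 27 |D|^6 |B|^2 \}\\
&= |D|^6 \max\{ 4|A|^3, 27 |B|^2 \} = |D|^6 h^{\textrm{naive}}(E),
\end{align*}
so $m = 6$ in this case.

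\medskip
\noindent\textbf{Case $j(E) = 1728$.} Here $B = 0$, and by Proposition \ref{prop:twist-equations}(ii) the twist $E_D$ has Weierstrass equation $y^2 = x^3 + DAx$. Therefore
\begin{align*}
h^{\textrm{naive}}(E_D) = \max\{ 4|DA|^3, 0 \} = 4 |D|^3 |A|^3 = |D|^3 \cdot 4|A|^3 = |D|^3 h^{\textrm{naive}}(E),
\end{align*}
since $h^{\textrm{naive}}(E) = \max\{4|A|^3, 0\} = 4|A|^3$. Thus $m = 3$.

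\medskip
\noindent\textbf{Case $j(E) = 0$.} Here $A = 0$, and by Proposition \ref{prop:twist-equations}(iii) the twist $E_D$ has Weierstrass equation $y^2 = x^3 + DB$. Therefore
\begin{align*}
h^{\textrm{naive}}(E_D) = \max\{ 0, 27|DB|^2 \} = 27 |D|^2 |B|^2 = |D|^2 \cdot 27|B|^2 = |D|^2 h^{\textrm{naive}}(E),
\end{align*}
since $h^{\textrm{naive}}(E) = \max\{0, 27|B|^2\} = 27|B|^2$. Thus $m = 2$.

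\medskip
In all three cases we have shown $h^{\textrm{naive}}(E_D) = |D|^m h^{\textrm{naive}}(E)$ with $m$ as claimed, which completes the proof.
\end{proof}

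There is no real obstacle here; the only point requiring mild care is to make sure that, in the cases $j(E) = 0$ and $j(E) = 1728$, one correctly identifies $h^{\textrm{naive}}(E)$ as $27|B|^2$ and $4|A|^3$ respectively (since the other term in the maximum vanishes), so that the power of $|D|$ factored out matches the stated value of $m$. One should also keep in mind that $E_D$ need not lie in the normalized family $\mathcal{E}$ even if $E$ does, but this is irrelevant for the height identity, which holds for the given Weierstrass models.
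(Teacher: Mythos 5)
Your proof is correct and is exactly the ``short calculation'' that the paper's proof alludes to: in each of the three cases you apply the definition of the naive height to the explicit twist equations from Proposition \ref{prop:twist-equations} and factor out $|D|^6$, $|D|^3$, or $|D|^2$ respectively. Nothing further is needed.
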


\begin{proof}
The proof is just a short calculation using the definition 
$$
h^{\textrm{naive}}(E) := \max{\{ 4|A|^3, 27 |B|^2 \}}
$$ 
of the naive height of an elliptic curve $E \colon y^2 = x^3 + Ax + B$ and the explicit equations for the twists $E_D$, as given in Proposition \ref{prop:twist-equations}.
\end{proof}

\begin{remark}
Observe that the values of $m$ appearing in the previous lemma coincide with the values of $m(j)$ that appear in Theorem \ref{theo:size_of_cm_elliptic_curves_j_inv}.    
\end{remark}

\section{The distribution of CM elliptic curves over \texorpdfstring{$\Q$}{Q} in the family \texorpdfstring{$\mathcal{ET}$}{ET}}\label{section:distribution-of-cm-elliptic-curves-using-twists}

In this section we study again the distribution of CM elliptic curves over $\Q$, but this time we use the theory of twists that was summarized in Section \ref{section:twists} in order to produce a different family of representatives for the $\Q$-isomorphism classes of CM elliptic curves over $\Q$ than the family $\mathcal{E}^{\textrm{cm}}$ that was used in the introduction and in Section \ref{section:distribution-of-cm-elliptic-curves-over-E}.

For each $j \in \Q$, let $E^{j}$ be any \textit{fixed} elliptic curve in short Weierstrass form $E^j \colon y^2 = x^3 + A_j x + B_j$ with $A_j, B_j \in \Z$ and having $j$-invariant $j(E^j) = j$. Then, for each of the curves $E^j$ we define the corresponding family of twists
\begin{equation}\label{eq:e_dk_f_family_definition}
    \mathcal{ET}_j := \{ E^j_D \suchthat D \in \Z^{\times}/(\Q^{\times})^{n(j)}\},
\end{equation}
where $n(j) = 2, 4$ or $6$, according to whether $j \neq 0, 1728$, $j = 1728$, or $j = 0$, respectively, as was defined in Corollary \ref{cor:corollary_silverman}. Moreover, we only consider representatives $D \in \Z^{\times}$ for the cosets as a consequence of Lemma \ref{lem:quotient_to_integers_lemma}. In particular, when taken in combination with Corollary \ref{cor:corollary_silverman} we have that every elliptic curve defined over $\Q$ and with $j$-invariant equal to $j$ is isomorphic over $\Q$ to a unique elliptic curve in the family $\mathcal{ET}_j$. 

Next, recall that $\mathcal{J}^{\textrm{cm}}$ was defined in \eqref{eqn:set-of-cm-j-invariants} as the set of thirteen CM $j$-invariants of CM elliptic curves over $\Q$. Then, we define the family 
\begin{equation}
    \mathcal{ET}^{\mathrm{cm}} := \coprod_{j \in \mathcal{J}^{\textrm{cm}}} \mathcal{ET}_j.
\end{equation}
In particular, if $E/\Q$ is any CM elliptic curve, then $E$ is $\Q$-isomorphic to a unique elliptic curve in the family $\mathcal{ET}$.

\begin{remark}
We have chosen the notations $\mathcal{ET}$, $\mathcal{ET}_j$ and $\mathcal{ET}^{\mathrm{cm}}$ to reflect the fact that the families consist of the twists of certain fixed elliptic curves.
\end{remark}

\begin{remark}
By Proposition \ref{prop:twist-equations} every elliptic curve in $\mathcal{ET}_j$ is in short Weierstrass form so its naive height is well defined.
\end{remark}

\begin{remark}
The definitions of the families $\mathcal{ET}_j$ and $\mathcal{ET}^{\mathrm{cm}}$ depend on the choice of elliptic curves $E^j$ for different values of $j$. Therefore, a notation like $\mathcal{ET}(j, E^j)$ might be more appropriate. However, to avoid overcomplicating the notation, we decided to use the former.
\end{remark}

Then, as we did with the families studied in the introduction and in Section \ref{section:distribution-of-cm-elliptic-curves-over-E}, for each $X > 0$ we define the \textit{finite} subsets of $\mathcal{ET}$ given by
\begin{align*}
\mathcal{ET}^{\mathrm{cm}}(X) := \{ E \in \mathcal{ET}^{\mathrm{cm}} \suchthat h^{\textrm{naive}}(E) \leq X\},  
\end{align*}
and for each CM $j$-invariant $j \in \mathcal{J}^{\textrm{cm}}$ the subsets
\begin{align*}
\mathcal{ET}_j(X) := \{ E \in \mathcal{ET}_j \suchthat h^{\textrm{naive}}(E) \leq X\}.
\end{align*}
It then follows that we have the disjoint union decomposition
\begin{equation}
    \mathcal{ET}^{\mathrm{cm}}(X) = \coprod_{j \in \mathcal{J}^{\textrm{cm}}} \mathcal{ET}_j(X).
\end{equation}

Then, using these definitions, we want to study how the CM elliptic curves are distributed inside the family $\mathcal{ET}^{\textrm{cm}}$ among the thirteen different CM $j$-invariants. In particular, we have the following theorem.

\begin{theorem}\label{thm:cm-elliptic-curves-asymptotic-twist-count}
Let $j \in \Q$. Let $n(j) = 2, 4$ or $6$ and $m(j) = 6, 3$ or $2$ according to whether $j \neq 0, 1728$, $j = 1728$ or $j = 0$, respectively. Then for $X > 0$ we have
\begin{align}\label{eqn:ETj-asymptotic}
\#\mathcal{ET}_j(X) = C(j) X^{1/m(j)} + O(X^{1/12})
\end{align}
as $X \to \infty$, where 
\begin{align}
C(j) := \frac{2}{\zeta(n(j))} \left(\frac{1}{h^{\textrm{naive}}(E^j)}\right)^{1/m(j)}.
\end{align}
Here $\zeta(s)$ denotes the Riemann zeta function and $E^j$ is an elliptic curve in short Weierstrass form $E^j \colon y^2 = x^3 + A_jX + B_j$ with $A_j, B_j \in \Z$ having $j$-invariant equal to $j$. Moreover, we have
\begin{align}\label{eqn:ET-asymptotic}
\#\mathcal{ET}^{\mathrm{cm}}(X) =  C(0)X^{1/2} + C(1728)X^{1/3} + \left( \sum_{\substack{j \in \mathcal{J}^{\mathrm{cm}}\\j \neq 0, 1728}} C(j) \right) X^{1/6} + O(X^{1/12}).
\end{align}
\end{theorem}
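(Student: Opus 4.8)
The plan is to reduce the count $\#\mathcal{ET}_j(X)$ to a count of power-free integers in a symmetric interval, and then invoke the Gegenbauer-type estimate of Proposition~\ref{thm:density-k-free-integers}. Write $H_j := h^{\textrm{naive}}(E^j)$, which is a fixed positive integer depending only on the chosen curve $E^j$. By Lemma~\ref{lem:quotient_to_integers_lemma} (applied with $n = n(j)$, which is even in each of the three cases), every coset in $\Z^{\times}/(\Q^{\times})^{n(j)}$ has a unique $n(j)$-th power-free representative $D \in \Z^{\times}$, so the elements of $\mathcal{ET}_j$ are indexed exactly by the $n(j)$-th power-free integers $D \neq 0$. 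By Lemma~\ref{lem:naive_height_twist}, $h^{\textrm{naive}}(E^j_D) = |D|^{m(j)} H_j$, so the bound $h^{\textrm{naive}}(E^j_D) \le X$ is equivalent to $|D| \le (X/H_j)^{1/m(j)}$. Hence
\begin{align*}
\#\mathcal{ET}_j(X) = \#\{ D \in \Z^{\times} : D \text{ is } n(j)\text{-th power-free},\ |D| \le (X/H_j)^{1/m(j)} \}.
\end{align*}

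Since $n(j)$ is even, $-1 \notin (\Q^{\times})^{n(j)}$, so $D$ and $-D$ are distinct $n(j)$-th power-free representatives; together with the fact that $0$ is not $n(j)$-th power-free, this gives $\#\mathcal{ET}_j(X) = 2\, Q_{n(j)}\!\left( (X/H_j)^{1/m(j)} \right)$, where $Q_k$ is as in Proposition~\ref{thm:density-k-free-integers}. Applying that proposition with $k = n(j)$ yields
\begin{align*}
\#\mathcal{ET}_j(X) = \frac{2}{\zeta(n(j))} \left(\frac{X}{H_j}\right)^{1/m(j)} + O\!\left( \left(\frac{X}{H_j}\right)^{1/(n(j)\,m(j))} \right).
\end{align*}
Now comes the one point that requires checking: in all three cases the product $n(j)\,m(j)$ equals $12$ (namely $2 \cdot 6$, $4 \cdot 3$, and $6 \cdot 2$), so the error term is uniformly $O(X^{1/12})$, and absorbing the constant $H_j^{-1/m(j)}$ into the main term gives exactly $\#\mathcal{ET}_j(X) = C(j) X^{1/m(j)} + O(X^{1/12})$, which is \eqref{eqn:ETj-asymptotic}.

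For \eqref{eqn:ET-asymptotic}, I would simply sum \eqref{eqn:ETj-asymptotic} over the thirteen $j \in \mathcal{J}^{\textrm{cm}}$ using the disjoint decomposition $\mathcal{ET}^{\mathrm{cm}}(X) = \coprod_{j} \mathcal{ET}_j(X)$: the term $j=0$ contributes $C(0) X^{1/2}$, the term $j = 1728$ contributes $C(1728) X^{1/3}$, each of the remaining eleven terms contributes $C(j) X^{1/6}$, and the thirteen error terms $O(X^{1/12})$ combine into a single $O(X^{1/12})$ since there are finitely many of them. I do not expect any genuine obstacle here; the only things demanding care are the sign/zero bookkeeping behind the factor of $2$, the appeal to Lemma~\ref{lem:quotient_to_integers_lemma} to legitimize working with integer representatives, and the numerical coincidence $n(j)\,m(j) = 12$ that makes the error exponent uniform.
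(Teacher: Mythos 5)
Your proposal is correct and follows essentially the same route as the paper: use Lemma \ref{lem:naive_height_twist} to turn the height bound into $|D| \le (X/h^{\textrm{naive}}(E^j))^{1/m(j)}$, reduce via Lemma \ref{lem:quotient_to_integers_lemma} to counting $n(j)$-th power-free integers so that $\#\mathcal{ET}_j(X) = 2\,Q_{n(j)}\bigl((X/h^{\textrm{naive}}(E^j))^{1/m(j)}\bigr)$, apply Proposition \ref{thm:density-k-free-integers} together with the identity $n(j)m(j)=12$, and then sum over the thirteen CM $j$-invariants. Your extra remark justifying the factor of $2$ (distinguishing $D$ and $-D$) is a small point the paper leaves implicit, but it is not a different argument.
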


\begin{proof}
Let $j \in \Q$ be fixed and let $X>0$. Then, using the formula from Lemma \ref{lem:naive_height_twist}, for a twist $E^j_D \in \mathcal{ET}^{\mathrm{cm}}_j$ we have

\begin{align*}
    h^{\textrm{naive}}(E^j_D) \leq X &\iff |D|^{m(j)} h^{\textrm{naive}}(E^j) \leq X \\
    &\iff |D|^{m(j)} \leq \frac{1}{h^{\textrm{naive}}(E^j)} X \\
    &\iff |D| \leq \left(\frac{1}{h^{\textrm{naive}}(E^j)}\right)^{1/m(j)} X^{1/m(j)}.
\end{align*}
Then, since 
\begin{align*}
    \mathcal{ET}_j(X) = \{ E^j_D \suchthat D \in \Z^{\times}/(\Q^{\times})^{n(j)} \, \text{and $h^{\textrm{naive}}(E_D^j) \leq X$}\},
\end{align*}
we conclude that
\begin{align}\label{eq:number_elements_e_dk_f}
    \# \mathcal{ET}_j(X)= \left\{ D \in \Z^{\times} \suchthat \text{$D$ is $n(j)$-free and } |D| \leq \left(\frac{1}{h^{\textrm{naive}}(E^j)}\right)^{1/m(j)} X^{1/m(j)}  \right\}.
\end{align}
Now, recall from Proposition \ref{thm:density-k-free-integers} that for $X > 0$ and $k \geq 2$ an integer, $Q_k(X)$ denotes the number of positive $k$-free integers $n$ such that $n \leq X$. Then, it follows from \eqref{eq:number_elements_e_dk_f} that
\begin{align*}
\# \mathcal{ET}_j(X) = 2 Q_{n(j)} \left( \left(\frac{1}{h^{\textrm{naive}}(E^j)}\right)^{1/m(j)} X^{1/m(j)}  \right).
\end{align*}

Hence, Proposition \ref{thm:density-k-free-integers} implies that

\begin{align}\label{eq:asymptotic_bound_E}
    \# \mathcal{ET}_j(X) = C(j) X^{1/m(j)} + O(X^{1/m(j)n(j)}) = C(j) X^{1/m(j)} + O(X^{1/12}),
\end{align} 
where the constant $C(j)$ is given by
\begin{align}\label{eq:constant_c_definition}
    C(j) = \frac{2}{\zeta(n(j))} \left(\frac{1}{h^{\textrm{naive}}(E^j)}\right)^{1/m(j)}
\end{align}
and we used the fact that $m(j)n(j)=12$, which follows from the definitions of $m(j)$ and $n(j)$ given in the statement of the theorem. This proves \eqref{eqn:ETj-asymptotic}. Finally, to prove \eqref{eqn:ET-asymptotic} recall that we have
\begin{align*}
    \mathcal{ET}^{\mathrm{cm}}(X) = \coprod_{j \in \mathcal{J}^{\textrm{cm}}} \mathcal{ET}_j(X).
\end{align*}
Hence, combining this decomposition with the asymptotic formulas \eqref{eqn:ETj-asymptotic} for all $j \in \mathcal{J}^{\textrm{cm}}$ gives the desired asymptotic. This completes the proof of the theorem.
\end{proof}

\begin{remark}
For $j \in \mathcal{J}^{\mathrm{cm}}$, Table \ref{table:densities-cm-j-invariants-ET} shows the proportions of CM elliptic curves with naive height $h^{\textrm{naive}}(E) \leq 10^{10}$ among the thirteen families $\mathcal{ET}_j$ within $\mathcal{ET}^{\mathrm{cm}}$. The reader can compare the similarity of these values with the ones presented in Table \ref{table:densities-cm-j-invariants} for the families $\mathcal{E}_j$ within $\mathcal{E}^{\mathrm{cm}}$.
\end{remark}

\begin{table}[H]
{\tabulinesep=1.2mm
\begin{tabu}{c c c c c} \hline
$d_K$ & $f$ & $j$-invariant & $\mathcal{ET}_{j}(10^{10})$ & $\dfrac{\mathcal{ET}_{j}(10^{10})}{\mathcal{ET}^{\text{cm}}(10^{10})}$ \\ \hline
\multirow{3}{*}{$-3$} & $1$ & $0$ & $38152$ & $0.934777282$ \\
& $2$ & $2^4 \cdot 3^3 \cdot 5^3$ & $14$ & $0.00034302$ \\
& $3$ & $-2^{15} \cdot 3 \cdot 5^3$ & $6$ & $0.000147008$ \\ \hline
\multirow{2}{*}{$-4$} & $1$ & $2^6 \cdot 3^3$ & $2606$ & $0.063850639$ \\
& $2$ & $2^3 \cdot 3^3 \cdot 11^3$ & $18$ & $0.000441025$ \\\hline
\multirow{2}{*}{$-7$} & $1$ & $-3^3 \cdot 5^3$ & $0$ & $0$ \\
& $2$ & $3^3 \cdot 5^3 \cdot 17^3$ & $2$ & $0.000049003$ \\ \hline
$-8$ & $1$ & $2^6 \cdot 5^3$ & $10$ & $0.000245014$ \\
$-11$ & $1$ & $-2^{15}$ & $2$ & $0.000049003$ \\
$-19$ & $1$ & $-2^{15} \cdot 3^3$ & $4$ & $0.000098006$ \\
$-43$ & $1$ & $-2^{18} \cdot 3^3 \cdot 5^3$ & $0$ & $0$ \\
$-67$ & $1$ & $-2^{15} \cdot 3^3 \cdot 5^3 \cdot 11^3$ & $0$ & $0$ \\
$-163$ & $1$ & $-2^{18} \cdot 3^3 \cdot 5^3 \cdot 23^3 \cdot 29^3$ & $0$ & $0$ \\ \hline \\
\end{tabu}}
\caption{The proportions of elliptic curves with CM by the different orders of class number 1 in the family $\mathcal{ET}^{\mathrm{cm}}$ and with naive height $h^{\textrm{naive}}(E) \leq 10^{10}$.}
\label{table:densities-cm-j-invariants-ET}
\end{table}

\begin{remark}
We note that in equation \eqref{eqn:ETj-asymptotic}, we obtained a main term of order $X^{1/6}$ for $j \neq 0, 1728$ and an error term of $O(X^{1/12})$ for the counting function $\#\mathcal{ET}_j(X)$. In contrast, in equation \eqref{eqn:Ej-asymptotic}, we only proved that the counting function satisfies $\#\mathcal{E}_j(X) = O(X^{1/6})$. This difference is reflected in the corresponding asymptotic formulas \eqref{eqn:ECM-asymptotic} for $\#\mathcal{E}^{\mathrm{cm}}(X)$ and \eqref{eqn:ET-asymptotic} for $\#\mathcal{ET}^{\mathrm{cm}}(X)$, where for the latter, we obtain a tertiary term of order $X^{1/6}$.
\end{remark}

Finally, as a direct consequence of Theorem \ref{thm:cm-elliptic-curves-asymptotic-twist-count} we have the following.

\begin{theorem}\label{theo:distribution_of_cm_elliptic_curves_with_twists}
The elliptic curves with $j$-invariant 0 comprise $100\%$ of all the elliptic curves in the family $\mathcal{ET}^{\mathrm{cm}}$. In other words, the natural density of the subfamily $\mathcal{ET}_0$ of elliptic curves with $j$-invariant $0$ inside the family $\mathcal{ET}^{\mathrm{cm}}$ equals $1$. More explicitly,
\begin{align*}
    d(\mathcal{ET}_0) = \lim_{X\to \infty} \frac{\#\mathcal{ET}_0(X)}{\#\mathcal{ET}^{\mathrm{cm}}(X)} = 1.
\end{align*}
\end{theorem}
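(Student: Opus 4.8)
The plan is to derive this immediately from the asymptotic formulas already established in Theorem \ref{thm:cm-elliptic-curves-asymptotic-twist-count}, exactly in the way that Theorem \ref{theo:distribution_of_cm_elliptic_curves} was deduced from Theorem \ref{theo:size_of_cm_elliptic_curves_j_inv}. First I would recall from \eqref{eqn:ETj-asymptotic} that $\#\mathcal{ET}_0(X) = C(0)X^{1/2} + O(X^{1/12})$ and from \eqref{eqn:ET-asymptotic} that $\#\mathcal{ET}^{\mathrm{cm}}(X) = C(0)X^{1/2} + C(1728)X^{1/3} + \left(\sum_{j\neq 0,1728} C(j)\right) X^{1/6} + O(X^{1/12})$. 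Note that $C(0) > 0$ since it equals $\tfrac{2}{\zeta(6)}\left(h^{\textrm{naive}}(E^0)\right)^{-1/2}$, which is a positive constant.

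Then I would form the quotient and divide numerator and denominator by $X^{1/2}$:
\begin{align*}
d(\mathcal{ET}_0) = \lim_{X\to\infty} \frac{\#\mathcal{ET}_0(X)}{\#\mathcal{ET}^{\mathrm{cm}}(X)} = \lim_{X\to\infty} \frac{C(0) + O(X^{-5/12})}{C(0) + C(1728)X^{-1/6} + \left(\sum_{j\neq 0,1728} C(j)\right)X^{-1/3} + O(X^{-5/12})}.
\end{align*}
As $X\to\infty$ every term in the denominator except $C(0)$ tends to $0$, and likewise the error term in the numerator tends to $0$, so the limit is $C(0)/C(0) = 1$. This establishes $d(\mathcal{ET}_0) = 1$.

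There is essentially no obstacle here: the theorem is a formal corollary of the asymptotic expansions, and the only thing to check carefully is that $C(0) \neq 0$ so that the division is legitimate and the limit is genuinely $1$ rather than an indeterminate form — but this is clear from the explicit formula for $C(j)$ together with the fact that $h^{\textrm{naive}}(E^0)$ is a fixed positive integer (for instance, if one takes $E^0 \colon y^2 = x^3 + 1$ then $h^{\textrm{naive}}(E^0) = 27$). The interpretive remark that this means elliptic curves with $j$-invariant $0$ comprise $100\%$ of $\mathcal{ET}^{\mathrm{cm}}$ then follows by definition of natural density.
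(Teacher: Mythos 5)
Your proposal is correct and matches the paper's approach: the paper states this result as a direct consequence of Theorem \ref{thm:cm-elliptic-curves-asymptotic-twist-count}, and your computation (dividing the asymptotics \eqref{eqn:ETj-asymptotic} and \eqref{eqn:ET-asymptotic} by $X^{1/2}$ and noting $C(0) > 0$) is exactly the intended argument, mirroring the proof of Theorem \ref{theo:distribution_of_cm_elliptic_curves}.
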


\section{Computational methods and code description}

The \texttt{Python} code used for the computations made in this article can be found in the \texttt{GitHub} repository \cite{BSCM24}. This code computes the distribution of $j$-invariants of CM elliptic curves with naive height less than $X$ by counting the appearances of $j$-invariants in each family $\mathcal{E}$ and $\mathcal{ET}$.

Here is a detailed explanation of what each function in the code does, along with its role in the overall analysis.

\subsection*{1. \texttt{disc(a, b)}} Calculates the discriminant of the elliptic curve \( E_{a,b}: y^2 = x^3 + ax + b \).

\subsection*{2. \texttt{naive\_height(a, b)}} Computes the \textit{naive height} of the elliptic curve \( E_{a,b} \).

\subsection*{3. \texttt{prime\_condition(a, b)}} Checks the primality condition on the integers \( a \) and \( b \), ensuring there is no prime \( p \) such that \( p^4 \) divides \( a \) and \( p^6 \) divides \( b \). The function analyzes the prime factorizations of \( a \) and \( b \) and returns \texttt{True} if the condition is satisfied, otherwise \texttt{False}.

\subsection*{4. \texttt{Get\_Distribution\_J\_invariants\_in\_family\_E(Xmax)}}
Computes the distribution of \( j \)-invariants in the family $\mathcal{E}$ for elliptic curves with naive height \( \leq X_{\text{max}} \). \\
\textbf{Steps}:
\begin{enumerate}
    \item Iterates over integers \( a \) and \( b \) within the range dictated by \( X_{\text{max}} \).
    \item For each pair \( (a, b) \), checks that the discriminant is non-zero and that the curve satisfies the prime condition.
    \item Computes the \( j \)-invariant for valid curves and counts how often specific \( j \)-invariants appear.
\end{enumerate}
This function finds all elliptic curves in the family $\mathcal{E}$ up to the given height bound and generates a dictionary of \( j \)-invariants and their frequency. The pseudocode for this function can be found in Algorithm \ref{algo:get_dist_E}.

\subsection*{5. \texttt{is\_k\_power\_free(n, k)}} Checks if the integer \( n \) is \textit{k-power-free}, meaning no prime factor of \( n \) is raised to an exponent of \( k \) or more in its factorization.

\subsection*{6. \texttt{get\_power\_free\_integers(X, k)}} Generates a list of integers \( d \) such that \( d \) is \( k \)-power-free and \( |d| \leq X \).

\subsection*{7. \texttt{Get\_Distribution\_J\_invariants\_in\_family\_ET(Xmax)}}
Computes the distribution of \( j \)-invariants the family $\mathcal{ET}$, focusing on curves that have \textit{complex multiplication} (CM). \\
\textbf{Steps}:
\begin{enumerate}
    \item Starts with a list of fixed elliptic curves corresponding to known \( j \)-invariants.
    \item For each fixed curve, scales the coefficients by integers \( d \), which are filtered to be \( k \)-power-free, where \( k \) depends on the \( j \)-invariant.
    \item For each scaled elliptic curve, checks whether the curve has CM and counts the number of CM curves for each \( j \)-invariant.
\end{enumerate}
The pseudocode for this function can be found in Algorithm \ref{algo:get_dist_ET}.

\begin{remark}
This code can be run sequentially or in parallel. We recommend to use the parallelized version for larger values of $X$. The code imports the SageMath library \texttt{sage} to compute factorizations and $j$-invariants. 
\end{remark}

\begin{algorithm}
\caption{Computes a dictionary containing a list of key-value pairs of the form $(j, \mathcal{E}_j(X_{\text{max}}))$ for every $j \in \mathcal{J}^{\mathrm{cm}}$.}\label{algo:get_dist_E}
\begin{algorithmic}[1]
\Procedure{Get\_Distribution\_J\_invariants\_in\_family\_E}{$X_{\text{max}}$}
    \State Initialize $a_{\text{bound}}, b_{\text{bound}}$ based on $X_{\text{max}}$
    \ForAll{$a \in [-a_{\text{bound}}, a_{\text{bound}}]$}
        \ForAll{$b \in [-b_{\text{bound}}, b_{\text{bound}}]$}
            \If{\Call{disc}{$a,b$} $\neq 0$ and \Call{prime\_condition}{$a,b$} is \texttt{True}}
                \State Add $(a,b)$ to list of valid elliptic curves
            \EndIf
        \EndFor
    \EndFor
    \ForAll{valid elliptic curves $(a,b)$}
        \State Compute $j$-invariant
        \If{$j$ is in the predefined list of possible $j$-invariants}
            \State Add to occurrences of $j$
        \EndIf
    \EndFor
    \State Return frequency distribution of $j$-invariants
\EndProcedure
\end{algorithmic}
\end{algorithm}

\begin{algorithm}
\caption{Computes a dictionary containing a list of key-value pairs of the form $(j, \mathcal{ET}_j(X_{\text{max}}))$ for every $j \in \mathcal{J}^{\mathrm{cm}}$.}\label{algo:get_dist_ET}
\begin{algorithmic}[1]
\Procedure{Get\_Distribution\_J\_invariants\_in\_family\_ET}{$X_{\text{max}}$}
    \ForAll{fixed $j$-invariants in Family ET}
        \State Determine $(a_j, b_j)$ and height for the fixed curve
        \State Compute scaling bounds for integers $d$
        \State Filter power-free integers for scaling
        \ForAll{valid scalings $d$}
            \State Scale $(a_j, b_j)$ and compute new curve coefficients
            \If{Elliptic curve has CM}
                \State Add to occurrences of $j$
            \EndIf
        \EndFor
    \EndFor
    \State Return frequency distribution of CM curves
\EndProcedure
\end{algorithmic}
\end{algorithm}

\section{Acknowledgments}
The authors thank the Centro de Investigación en Matemática Pura y Aplicada (CIMPA) and the School of Mathematics of the University of Costa Rica for their administrative help and support during this project. This paper was written as part of research project 821-C1-218, led by the first author and registered with the Vicerrectoría de Investigación of the University of Costa Rica. We also thank Álvaro Lozano-Robledo for his helpful comments.

\bibliography{densities-biblio.bib}
\bibliographystyle{alphaurl}

\end{document}